\newtheorem{thm}{Theorem}[section]
\newtheorem{cor}[thm]{Corollary}
\newtheorem{lem}[thm]{Lemma}
\newtheorem{prop}[thm]{Proposition}
\theoremstyle{definition}
\newtheorem{rem}{Remark}[section]
\newtheorem*{ack}{Acknowledgements}
\theoremstyle{remark}
\let\e\varepsilon
 \newcommand{\U}{{\mathcal{U}}}
\newcommand{\bB}{{\mathbb{B}}}
\newcommand{\bM}{{\mathbb{M}}}
\newcommand{\Tr}{\operatorname{Tr}}
\newcommand{\id}{{\operatorname{id}}}
\begin{document}
\title[]{Homomorphisms with small bound between Fourier algebras
}

\author[Y. Kuznetsova]{Yulia Kuznetsova}
\address{Laboratoire de Math\'ematiques de Besan\c con\\
Universit\'e Bourgogne Franche-Comt\'e \\
16 route de Gray\\
 25030 Besan\c con Cedex, France}
\email{yulia.kuznetsova@univ-fcomte.fr 
    }

\author[J. Roydor]{Jean Roydor 
}
\address{Institut de Math\'ematiques de Bordeaux  \\
Universit\'e de Bordeaux  \\
 351 Cours de la Lib\'eration \\
 33405 Talence Cedex, France}
\email{jean.roydor@math.u-bordeaux1.fr 
}

\begin{abstract} Inspired by Kalton\&Wood's work on group algebras, we describe almost completely contractive algebra homomorphisms from Fourier algebras into Fourier-Stieltjes algebras (endowed with their canonical operator space structure). We also prove that two locally compact groups are isomorphic if and only if there exists an algebra isomorphism $T$ between the associated Fourier algebras (resp.  Fourier-Stieltjes algebras)
 with completely bounded norm $\| T \|_{cb} < \sqrt {3/2}$ (resp. $ \| T \|_{cb} < \sqrt {5}/2$).  We show similar results involving the norm distortion $\| T \| \| T ^{-1} \|$ with universal but non-explicit bound. Our results subsume Walter's well-known structural theorems and also Lau's theorem on second conjugate of Fourier algebras.

\end{abstract}
\maketitle



\section{Introduction and notation}

\indent To a locally compact group, one can associate several different algebras. The general question is: does this algebra remember the group? Or more precisely, which relation does one need between two algebras to be able to identify the groups? For instance, let $G$ and $H$ be two locally compact groups; it is a classical result of J. Wendel \cite{We2} that $G$ and $H$ are isomorphic as topological groups if and only if there exists a contractive algebra isomorphism between the group algebras $L_1(G)$ and $L_1(H)$ (equipped with the convolution product). This improves his earlier paper \cite{We} concerning isometries, note also that B.E. Johnson \cite{J2} and R. Rigelhof \cite{Rig} proved analogous results for measure algebras. It is classical in Banach space theory to look for stability of isometric results (see e.g. \cite{A}, \cite{Ca} and \cite{B}). In this vein, N. Kalton and G. Wood  improved Wendel's result in weakening the relation between the group algebras. The algebraic hypothesis in Wendel's result can not be dropped, because surjective linear isometries between $L_1$-spaces only determine the underlying measure spaces. But one can relax the restriction on the norm of the algebra isomorphism: in \cite{KW}, N. Kalton and G. Wood showed that $G$ and $H$ are isomorphic if and only if there exists an algebra isomorphism $T$ of
$L_1(G)$ onto $L_1(H)$ with $\| T \|<  \gamma\approx 1.246$. In the case of two locally compact  abelian groups, they show that the bound can be improved to equal $\sqrt{2}$ and is actually optimal, see Example 1 in \cite{KW}.\\
 \indent In this paper, we are interested in the Fourier algebra $A(G)$ and the Fourier-Stieltjes algebra $B(G)$ associated to a locally compact group $G$ (defined by P. Eymard in \cite{E}). These algebras are isometric respectively to the predual of the group von Neumann algebra $VN(G)$ and to the predual of the universal von Neumann algebra $W^*(G)$ of the group (in particular, they are noncommutative $L_1$-spaces), see the end of the section for more details. We just mention here that in the case of an abelian group $G$, the algebra $A(G)$, respectively $B(G)$, is identified (via the Fourier transform) with the group algebra $L_1(\hat G)$, respectively with the measure algebra $M(\hat G)$ (where $\hat G$ denotes the dual group of $G$).\\
 \indent It is a well-known result of M. Walter \cite{Wa} that the Fourier algebras $A(G)$ and $A(H)$ are isometrically isomorphic as Banach algebras if and only if $G$ and $H$ are isomorphic as topological groups. Actually contractivity of the algebra isomorphism is sufficient (see Corollary 5.4 \cite{Ph}),  which is analogous to Wendel's results. M. Walter proved a similar theorem for Fourier-Stieltjes algebras, which is thus the analog of Johnson's result mentioned above. Inspired by the improvement of N. Kalton and G. Wood described above, one can wonder whether the isometric or contractive assumption on the algebra isomorphism in Walter's result is really needed to recover the groups structure? To our knowledge, this question has never been studied before. All the known results (see \cite{Wa}, \cite{Ph} and \cite{IS}) on homomorphisms of Fourier algebras require posivity, contractivity or complete contractivity of the homomorphism, except Theorem 3.7 \cite{IS} which needs in return  the amenability of the group and complete boundedness of the algebra homomorphism.\\
 \indent In Section 2, we consider Fourier algebras and Fourier-Stieltjes algebras as completely contractive algebras, i.e. endowed with their canonical operator space structure, E. Effros and Z.-J. Ruan were the first to consider these operator space structures (see \cite{ER1} for more details). This means in particular that instead of the class of all bounded operators, we consider a smaller subclass of completely bounded operators with the completely bounded norm $\|\cdot\|_{cb}$ (see \cite{BLM}, \cite{P} or \cite{Pa} for general theory of operator spaces). Usually, analogues between group algebras and Fourier algebras work better in the category of operator spaces (see e.g. the nice result of \cite{Ru}). Therefore, we first treat the almost completely contractive case (note that $\| T \| \le \| \id_{\mathbb{M}_2} \otimes T \| \le \|T\|_{cb}$):

\begin{thm} \label{main1} Let $G$ and $H$ be locally compact groups.
\begin{enumerate}
\item Let $T:A(G) \to B(H)$ be a nonzero algebra homomorphism. Suppose that  $\| \id_{\mathbb{M}_2} \otimes T \|  <\sqrt {5}/{2}$, then there exist an open subgroup $\Omega$ of $H$, $t_0 \in G$, $h_0 \in H$ and a continuous group morphism $\tau:\Omega \to G$ such that
$$T(f)(h)=  \begin{cases}
 f(t_0  \tau(h_0 h)) & \text{if\, } h \in h_0^{-1}\Omega\\
0 & \text{if\, not},
\end{cases}  $$
for any $f  \in A(G)$, $h \in H$. Hence
$T$ is actually completely contractive, i.e.~$\|T\|_{cb}\le 1$.
\item Let $T:A(G) \to A(H)$ be a surjective algebra isomorphism between the Fourier algebras. If  $\| \id_{\mathbb{M}_2} \otimes T \|  < \sqrt {{3}/{2}}$, then there exist $t_0 \in G$ and a topological isomorphism $\tau:H \to G$ such that $$T(f)(h)=f(t_0  \tau(h)),$$ for any $f  \in A(G)$, $h \in H$. Hence
$T$ is actually completely isometric.
\item Let $T:B(G) \to B(H)$ be a surjective algebra isomorphism between Fourier-Stieltjes algebras. If  $\| \id_{\mathbb{M}_2} \otimes T \| <\sqrt {5}/{2}$, the same conclusion as in 2.~holds (for any $f \in B(G)$).
  \end{enumerate}
  \end{thm}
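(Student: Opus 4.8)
The plan is to first extract a point map from the purely algebraic structure, and then to use the norm hypothesis to rigidify it. Since elements of $B(H)$ are continuous bounded functions on $H$, for each $h\in H$ the evaluation $\delta_h\colon g\mapsto g(h)$ is a character of $B(H)$, so $f\mapsto T(f)(h)$ is a multiplicative functional on $A(G)$. By Eymard's identification of the Gelfand spectrum of $A(G)$ with $G$, this functional is either $0$ or evaluation $\delta_{x(h)}$ at a unique point $x(h)\in G$. Setting $\Omega_0=\{h\in H: T(f)(h)\neq0 \text{ for some } f\in A(G)\}$, I obtain the pointwise formula $T(f)(h)=f(x(h))$ on $\Omega_0$ and $T(f)(h)=0$ off $\Omega_0$; dually, $T^*\colon W^*(H)\to VN(G)$ sends the universal unitary $\omega_H(h)$ to $\lambda(x(h))$ for $h\in\Omega_0$ and to $0$ otherwise. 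First I would record that $\Omega_0$ is open and that $x$ is continuous on it, both of which follow from the continuity of the functions in the range of $T$ together with the regularity of $A(G)$ (enough functions to separate and localize points of $G$).

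The heart of the matter is that neither the coset shape of $\Omega_0$ nor the homomorphism property of $x$ is forced by multiplicativity and boundedness alone; these are exactly what the bound $\|\id_{\mathbb{M}_2}\otimes T\|<\sqrt5/2$ buys. I would argue by contradiction: if $x$ failed to be a translate of a morphism, i.e.\ if there were $h_1,h_2$ for which $x(h_1h_2)$ differed from the value predicted by a translated homomorphism, then the group-like elements $\lambda(x(h_1)),\lambda(x(h_2)),\lambda(x(h_1h_2))$ would be in sufficiently general position that one can assemble an explicit $2\times2$ matrix over $A(G)$ whose image under $\id_{\mathbb{M}_2}\otimes T$ has norm forced to exceed $\sqrt5/2$. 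Passing to $\mathbb{M}_2$ is essential here: at the matrix level one also detects and excludes the transpose/inversion map $\lambda(s)\mapsto\lambda(s^{-1})$, which is an \emph{isometry} of $A(G)$ but not a complete contraction, and which is precisely the degeneration producing the $\tau(h)^{-1}$ branch in Walter's isometric classification. Optimizing these matrix estimates over the relevant configurations is what produces the thresholds $\sqrt5/2$ and, in the tighter isomorphism case, $\sqrt{3/2}$; I expect this optimization to be the main technical obstacle.

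Granting the rigidity, $x$ restricted to $\Omega_0$ is a translate of a continuous group morphism: choosing $h_0$ with $h_0^{-1}\in\Omega_0$ and setting $\Omega=h_0\Omega_0$, $t_0=x(h_0^{-1})$ and $\tau(k)=t_0^{-1}x(h_0^{-1}k)$, one checks that $\Omega$ is an open subgroup of $H$, that $\tau\colon\Omega\to G$ is a continuous homomorphism, and that $T(f)(h)=f(t_0\tau(h_0h))$ on $h_0^{-1}\Omega$, which is the asserted form. For the final clause of (1) I would verify directly that any map of this shape is completely contractive, using functoriality of $A(\,\cdot\,)$ under continuous homomorphisms and under restriction to open subgroups together with the behaviour of the canonical operator space structures; the same computation shows the hypothesis is self-improving to $\|T\|_{cb}\le1$.

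For (2) and (3) one feeds in surjectivity and the existence of $T^{-1}$. Applying the first step to both $T$ and $T^{-1}$ forces $\Omega_0=H$ (no vanishing) and $\tau$ onto, hence $\tau$ a topological isomorphism and $T(f)(h)=f(t_0\tau(h))$, with complete isometry following as in (1). The only genuine change between the $A(G)\to A(H)$ and $B(G)\to B(H)$ settings is the ambient spectrum used in the first step (points of $G$ versus the characters of $B(G)$, where the norm bound must additionally force the relevant character to be a point evaluation) and the corresponding extremal matrix estimate, which accounts for the different thresholds $\sqrt{3/2}$ and $\sqrt5/2$.
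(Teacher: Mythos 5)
Your skeleton does coincide with the paper's: dualize to $T^*$, identify $\sigma(A(G))$ with $G$ so that $T^*(\omega_h)$ is either some $\lambda_{x(h)}$ or $0$, translate by a suitable $h_0$ and by $T^*(h_0)$ to make the map unital, prove the domain is an open subgroup and the point map a morphism, and invoke Proposition 3.1 of \cite{IS} for the final complete-contractivity clause. But the step that actually proves the theorem --- your second paragraph --- is missing, and you flag it yourself as ``the main technical obstacle''. What the paper does there is not a global contradiction argument about matrices ``over $A(G)$'' in general position: it is a direct, local computation on the dual side. Two elementary operator lemmas are proved (Lemmas \ref{invmult} and \ref{unitmult}): for unitaries $u,v$ and $c\ge 1$, if $\bigl\| \left[\begin{smallmatrix} u & 1 \\ -1 & x \end{smallmatrix}\right] \bigr\| \le c\sqrt2$ then $\|x-u^*\|\le 2\sqrt{c^2-1}$, and if $\bigl\| \left[\begin{smallmatrix} u & x \\ -1 & v \end{smallmatrix}\right] \bigr\| \le c\sqrt2$ then $\|x-uv\|\le 2\sqrt{c^2-1}$. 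These are applied to the images under $\id_{\mathbb{M}_2}\otimes S^*$ of the norm-$\sqrt2$ matrices $\left[\begin{smallmatrix} \omega_{h} & 1 \\ -1 & \omega_{h^{-1}} \end{smallmatrix}\right]$ and $\left[\begin{smallmatrix} \omega_{h_1} & \omega_{h_1h_2} \\ -1 & \omega_{h_2} \end{smallmatrix}\right]$, i.e.\ matrices of \emph{unitaries in $W^*(H)$}, not matrices of functions in $A(G)$ (whose Fourier-algebra norms you would have little control over). The second missing ingredient is the gap estimate (Lemma \ref{norm-in-vng}): $\| \sum_j c_j \lambda_{g_j} \|_{VN(G)} \ge (\sum_j |c_j|^2)^{1/2}$ for distinct $g_j$, so distinct $\lambda_g$'s are $\sqrt2$-separated and each is at distance $1$ from $0$. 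With these, the thresholds are mechanical, not the outcome of a hard optimization: $c<\sqrt5/2$ iff $2\sqrt{c^2-1}<1$, which is exactly what rules out $S^*(\omega_{h^{-1}})=0$ and $S^*(\omega_{h_1h_2})=0$ (the vanishing alternative your sketch never confronts, and the reason case (1) needs the stronger bound), while $c<\sqrt{3/2}$ iff $2\sqrt{c^2-1}<\sqrt2$, which suffices in case (2) where vanishing cannot occur and only the mutual $\sqrt2$-gap matters. So the two thresholds reflect distance-to-zero versus distance-between-points; they are not, as you suggest, an artifact of the difference between the spectra of $A(\cdot)$ and $B(\cdot)$. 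Without the two matrix lemmas and the gap estimate, your plan restates the theorem rather than proving it.

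Case (3) contains a second gap. Surjectivity of $T$ does not ``force $\Omega_0=H$ (no vanishing)'': for $h\in H$ the character $T^*(\omega_h)$ of $B(G)$ is automatically nonzero, but by Theorem 1 of \cite{Wa} it may lie in the annihilator $A(G)^0$ rather than in $G$, and invertibility of $T$ alone excludes neither this nor the failure of $\tau$ to be onto. The paper needs a genuine argument: injectivity of $T$ plus regularity of $A(G)$ shows that $\{h\in H: T^*(\omega_h)\in G\}$ has dense image in $G$; the matrix rigidity makes it an open subgroup of $H$ mapping onto an open dense --- hence equal to $G$ --- subgroup; and a final $w^*$-density argument in $W^*(H)$ shows this subgroup is all of $H$. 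None of this appears in your sketch. On the positive side, your observation that the $\mathbb{M}_2$-level hypothesis is what excludes the inversion map, and hence why case (2) yields an isomorphism rather than Walter's isomorphism-or-anti-isomorphism alternative, is correct: that is precisely the role of the order-sensitive matrix $\left[\begin{smallmatrix} \omega_{h_1} & \omega_{h_1h_2} \\ -1 & \omega_{h_2} \end{smallmatrix}\right]$, which forces $s(h_1h_2)=s(h_1)s(h_2)$ in that order.
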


The important point here is that our result enables us to observe the new curious phenomenon of a ``norm gap'': for any nonzero algebra homomorphism $T:A(G) \to B(H)$,  we have either $\|  T \|_{cb}  =1$ or $\|  T \|_{cb}  \geq \sqrt 5/{2}$. Idem for cases (2) and (3). This can be compared with the result of N. Kalton and G. Wood \cite{KW}: for $G,H$ abelian groups, if $T:L_1(G)\to L_1(H)$ is a surjective algebra isomorphism then either $T$ is isometric or $\|T\|\ge (1+\sqrt3)/2$.\\
 \indent In Section 3, we consider Fourier algebras and Fourier-Stieltjes algebras as Banach algebras (and the usual operator norm of $T$). We are still able to prove that Walter's result is stable but we need a hypothesis on the norm distortion now:

\begin{thm}\label{main2} There exists a universal constant $\varepsilon_0>0$ such that for any locally compact groups $G$ and $H$,
\begin{enumerate}
\item if $T:A(G) \to A(H)$ is a surjective algebra isomorphism between the Fourier algebras and $\|  T \| \|  T^{-1} \| <1+\varepsilon_0$, then there exist $t_0 \in G$ and a topological isomorphism or anti-isomorphism $\tau:H \to G$ such that for any $f \in A(G)$, $h \in H$,
$$T(f)(h)=f(t_0  \tau(h)),$$ hence $T$ is actually isometric.
\item The same result holds for surjective algebra isomorphisms between Fourier-Stieltjes algebras.
\end{enumerate}
\end{thm}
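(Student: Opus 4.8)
The plan is to first pin down the form of $T$ from the algebraic hypothesis alone, and only then to convert the metric hypothesis into rigidity. Since $T:A(G)\to A(H)$ is a surjective algebra isomorphism, its transpose carries the nonzero multiplicative functionals of $A(H)$ bijectively onto those of $A(G)$. By Eymard's identification of the Gelfand spectrum of a Fourier algebra with the underlying group, these functionals are exactly the point evaluations $\delta_h$, $h\in H$ (resp.\ $\delta_t$, $t\in G$); writing $\delta_h\circ T=\delta_{\sigma(h)}$ defines a homeomorphism $\sigma:H\to G$ with $T(f)=f\circ\sigma$. Absorbing $t_0:=\sigma(e_H)$ into the isometric translation $f\mapsto f(t_0\,\cdot)$, I may assume $\sigma(e_H)=e_G$. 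The whole statement then reduces to a rigidity claim: there is a universal $\varepsilon_0>0$ so that if the composition homeomorphism $\sigma$ satisfies $\|T\|\,\|T^{-1}\|<1+\varepsilon_0$, then $\sigma$ is a topological group isomorphism or anti-isomorphism (Walter's theorem supplies the converse, that each such $\sigma$ produces an isometric $T$).

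Second, I would dualize. After the normalization above, the adjoint $T^*:VN(H)\to VN(G)$ is a unital (since $\lambda(e)=1$), weak$^*$-homeomorphic linear bijection with $\|T^*\|\,\|(T^*)^{-1}\|=\|T\|\,\|T^{-1}\|<1+\varepsilon_0$, and it carries the generating unitaries onto one another, $T^*\lambda_H(h)=\lambda_G(\sigma(h))$, because $\langle f,\lambda_G(\sigma(h))\rangle=f(\sigma(h))=T(f)(h)=\langle T(f),\lambda_H(h)\rangle$. Everything is thus recast as: a unital linear isomorphism between group von Neumann algebras, of small Banach distortion, mapping one copy of the group unitaries onto the other. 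The goal is to show such a map must be a Jordan $*$-isomorphism; its action on the unitary generators then forces $\sigma$ to be multiplicative or anti-multiplicative, which is precisely the isomorphism/anti-isomorphism dichotomy. It is worth stressing why the anti-isomorphism occurs here but not in \thmref{main1}: the plain operator norm does not record the operator-space orientation, equivalently $A(G)$ and $A(G^{\mathrm{op}})$ are isometric (via $f\mapsto\check f$, $\check f(t)=f(t^{-1})$) though not completely isometric, so the Banach hypothesis cannot distinguish $G$ from its opposite.

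Third, I would produce the universal constant $\varepsilon_0$ by a compactness/contradiction argument rather than by an explicit estimate, which accounts for the word ``non-explicit'' in the statement. Assuming the theorem fails, I take groups $G_n,H_n$ and isomorphisms $T_n$ with $\|T_n\|\,\|T_n^{-1}\|\to 1$ whose associated maps $\sigma_n$ are not affine, and pass to an ultraproduct of the relevant preduals and von Neumann algebras. In the limit the distortion is $1$, so the limiting unital map is isometric, hence a Jordan $*$-isomorphism by Kadison's theorem; feeding this back should force $\sigma_n$ to be affine for large $n$, a contradiction. For the Fourier--Stieltjes statement (item 2) I would run the identical scheme with $W^*(G)$ in place of $VN(G)$, the group sitting inside as its universal unitaries, invoking Walter's structural description of $B(G)$ to isolate the pertinent part of the spectrum.

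I expect the genuine difficulty to be the limiting step: an ultraproduct of Fourier algebras (or of group von Neumann algebras) need not again be a Fourier algebra (a group von Neumann algebra), so neither Walter's theorem nor \thmref{main1} applies to the limit off the shelf. The crux is therefore to arrange the limit so that the rigidity survives---for instance by retaining only the unitary generators and extracting a limiting group law, or by proving directly, with a non-explicit constant, the stability statement that a unital near-isometry between von Neumann algebras carrying unitary generators to unitary generators is a Jordan $*$-isomorphism. Converting this Jordan alternative into the clean ``isomorphism or anti-isomorphism'' conclusion, and verifying continuity and surjectivity of $\sigma$ throughout, are the remaining points that will need care.
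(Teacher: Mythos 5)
Your skeleton matches the paper's: dualize, normalize to a unital map, use an ultraproduct-plus-Kadison compactness argument to obtain a non-explicit constant, conclude that the adjoint is a Jordan $*$-isomorphism, and finish with Walter's theorems. But the step you yourself flag as ``the crux'' --- passing from the limiting Jordan $*$-isomorphism back to rigidity of the individual maps $\sigma_n$ --- is a genuine gap, and neither of your proposed fixes closes it. Kadison's theorem applied to the ultraproduct map only tells you that the Jordan defects $\| T_n^{*J}(x_n,y_n)\|$ tend to zero along the ultrafilter for every bounded sequence; by itself this cannot force any single $\sigma_n$ to be exactly multiplicative or anti-multiplicative. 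To run your contradiction you need a \emph{uniform quantitative dichotomy}: if $\sigma_n$ fails to be (anti-)multiplicative at some pair of group elements, then the corresponding Jordan defect is bounded below by a universal constant. That lower bound is precisely what your proposal lacks.

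The paper closes this with two ingredients. First, the ultraproduct/Kadison argument is packaged as a quantitative lemma about \emph{general} unital $C^*$-algebras (Lemma \ref{jc}): for every $\eta>0$ there is $\rho>0$ such that a unital isomorphism with $\|T\|,\|T^{-1}\|\le 1+\rho$ satisfies $\|T^J\|<\eta$. Since Kadison's theorem holds for arbitrary $C^*$-algebras, your worry that an ultraproduct of group von Neumann algebras is no longer a group von Neumann algebra is immaterial: no group structure is needed in the limit. Second --- and this is the missing idea --- the group unitaries are \emph{uniformly discrete} in $VN(G)$: Lemma \ref{norm-in-vng} shows that $\|\lambda_{g_1}+\lambda_{g_2}-\lambda_{g_3}-\lambda_{g_4}\|$ is either $0$ or at least $\sqrt2$. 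Since the normalized adjoint $S^*$ carries $\lambda_h$ to $\lambda_{\sigma(h)}$, the defect $S^{*J}(h_1,h_2)$ is exactly such a four-term combination, so once it is $<1$ (take $\eta=1$) it must vanish; this gives at each pair the multiplicative-or-anti-multiplicative alternative, in either case exact preservation of the Jordan product of the generators, and $w^*$-density then upgrades $S^*$ to a Jordan $*$-isomorphism of $VN(H)$ onto $VN(G)$, after which Walter's arguments yield the global isomorphism/anti-isomorphism dichotomy (the $*$-preservation comes from the same estimate applied to the pairs $(h,h^{-1})$). For the Fourier--Stieltjes case there is a further point your sketch glosses over: one must rule out that $S^*(h)S^*(h^{-1})$ lands in the part of $\sigma(B(G))$ lying outside $G$, which requires a separate separation estimate (Lemma \ref{G-separated-in-spectre-of-BG}) combined with Walter's identification of $H$ inside $W^*(H)$; invoking the structural description of $B(G)$ alone does not do this. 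Without these uniform gap estimates your ``feeding back'' step fails, so the proposal as it stands is an outline of the correct strategy with its decisive step unproved.
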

This theorem has an interesting application. In \cite{L}, A.T.-M. Lau considers the second conjugate $A(G)^{**}$ equipped with the Arens product. He proved (see Theorem 5.3  \cite{L}) that two discrete groups $G$ and $H$ are isomorphic if there is an order preserving isometric algebra isomorphism $T:A(G)^{**} \to A(H)^{**}$. Our Corollary \ref{lau} improves his theorem, even in the isometric case, as we do not need any assumption on the order structure.\\
 \indent Our proofs are by duality, they are entirely different from Kalton and Wood's ones (as we want to deal with non-abelian groups). For the proof of our Theorem \ref{main1}, we use two $2 \times 2$ matrix tricks and for Theorem \ref{main2} we use ultraproducts of $C^*$-algebras, what allows us to obtain a universal but non-explicit bound. It is important to notice that all the known structural results (see \cite{Wa}, \cite{Ph}) rely on the linear independence of the $\lambda_g$'s in $VN(G)$ (see notation below). Our novelty is to deal with homomorphisms with norm greater than one, we need more precision and we base our proofs on the fact the $\lambda_g$'s form a uniformly discrete subset of $VN(G)$.\\
\indent We finish this section with some notation. The reader is referred to \cite{E} for details on Fourier and Fourier-Stieltjes algebras and to \cite{ER1} for their operator space structure. If $K$ is a Hilbert space, then $\mathbb{B}(K)$ denotes the algebra of all bounded operators on $K$ equipped with the operator norm and $\mathbb{U}(K)$ the group of unitary operators on it. We denote $\omega:G \to \mathbb{U}({K}_\omega)$ the universal representation of $G$ and $W^*(G)$ is the von Neumann subalgebra of $\mathbb{B}(K_\omega)$ generated by $\omega (G)$.
We recall that $B(G)$ is the set of all functions defined on $G$ of the form $g \mapsto \langle  \omega_g ( \xi), \eta \rangle$, where $\xi, \eta \in {K}_\omega$. Then $B(G)$ is a completely contractive algebra (in the sense of \cite{ER1}) for the pointwise multiplication and the operator structure herited from the duality $B(G)^*=W^*(G)$. We denote $\lambda:G \to \mathbb{U}(L_2(G))$ the left regular representation and $VN(G)$ the von Neumann algebra generated by $\lambda (G)$ inside $\mathbb{B}(L_2(G))$.
We recall that $A(G)$ is the ideal of $B(G)$ consisting  of all functions defined on $G$ of the form $g \mapsto \langle  \lambda_g ( \xi), \eta \rangle$, where $\xi, \eta \in L_2(G)$. Then $A(G)$ is also a completely contractive algebra and $A(G)^*=VN(G)$. For $f \in B(G)$, $g \in G$, the left translation of $f$ by $g$ is denoted $g \cdot f$, i.e. $(g \cdot f) (t)=f (g^{-1}t)$, $t \in G$. Finally, for a Banach algebra $A$, we denote $\sigma(A)$ the spectrum of $A$ i.e. the set of all nonzero multiplicative linear functionals on $A$.

\begin{ack} This work started while the second author was visiting the University of Bourgogne--Franche-Comt\'e at Besan\c con during Fall 2014; he would like to thank its Functional Analysis team for the wonderful organization of the thematic program ``Geometric and noncommutative methods in functional analysis". He is also grateful to the Functional Analysis team at IMJ-PRG (Universit\'e Pierre et Marie Curie, Paris), especially Gilles Godefroy and Yves Raynaud, for hosting him since Fall 2015.
\end{ack}

\section{The almost completely contractive case}

In the next two lemmas, $K$ denotes a Hilbert space. As usual in operator space theory, we identify isometrically $\bM_2(\bB(K))=\bB(K \oplus^2 K)$ (where $\oplus^2$ is the Hilbert space direct sum).
\begin{lem}\label{invmult} Let $u$ be a unitary in $\bB(K)$.
 Let $x \in \bB(K)$ and $c \geq 1 $ such that
$$\left\| \left[ \begin{array}{cc} u  & 1  \\ -1 & x \end{array}\right] \right\| \leq c\sqrt{2},$$
then $\big\| x-u^* \big\| \leq 2\sqrt{c^2-1} .$
\end{lem}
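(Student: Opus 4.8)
The plan is to unwind the matrix--norm hypothesis into a vector estimate and then feed the matrix a single well-chosen test vector. Write $M$ for the operator in $\bB(K\oplus^2 K)$ appearing in the statement; the hypothesis reads $\|M\zeta\|\le c\sqrt2\,\|\zeta\|$ for every $\zeta\in K\oplus^2 K$. Since $\|x-u^*\|=\sup_{\|w\|=1}\|(x-u^*)w\|$, it suffices to produce, for each $w\in K$, a vector $\zeta$ whose image $M\zeta$ displays $(x-u^*)w$ as one coordinate while keeping $\|\zeta\|$ and the other coordinate under control.

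The natural candidate is $\zeta=(u^*w,\,w)$ with $w\in K$ arbitrary, and the unitarity of $u$ is exactly what makes it work. The first coordinate of $M\zeta$ is $u(u^*w)+w=(uu^*+1)w=2w$, which telescopes, while the second coordinate is $-u^*w+xw=(x-u^*)w$. Thus $M\zeta=(2w,\,(x-u^*)w)$, and $\|\zeta\|^2=\|u^*w\|^2+\|w\|^2=2\|w\|^2$, again because $u^*$ is isometric. (Equivalently, one could first normalise to $u=1$ by replacing $M$ with $\operatorname{diag}(u^*,1)\,M\,\operatorname{diag}(1,u)$, which preserves the norm and turns $x-u^*$ into $xu-1$; but the test vector handles general $u$ directly.)

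Substituting into $\|M\zeta\|^2\le 2c^2\|\zeta\|^2$ gives
\[ 4\|w\|^2+\|(x-u^*)w\|^2 \;\le\; 2c^2\|\zeta\|^2 \;=\; 4c^2\|w\|^2, \]
whence $\|(x-u^*)w\|^2\le 4(c^2-1)\|w\|^2$. Taking the supremum over unit vectors $w$ yields $\|x-u^*\|\le 2\sqrt{c^2-1}$, as required.

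The only genuine difficulty is locating the test vector: once the pairing $\xi=u^*w$, $\eta=w$ is written down, the estimate is immediate. I expect the tempting alternative to be expanding $MM^*$ and invoking the positivity criterion for $2c^2 I-MM^*\ge 0$ via a Schur complement; this is more systematic but produces unwanted cross terms (coupling $u$ and $x-u^*$) that one must discard by Cauchy--Schwarz, leading to the messier bound $2(c^2-1)(1+c\sqrt2)/(2c^2-1)$, which one then has to verify is $\le 2\sqrt{c^2-1}$. Exploiting $uu^*=1$ to collapse the first row with the diagonal-type vector $(u^*w,w)$ sidesteps all of this, so the main insight is simply to choose that vector.
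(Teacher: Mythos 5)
Your proof is correct and is essentially the paper's own argument: the paper first conjugates $M$ by $\operatorname{diag}(u^*,1)$ and $\operatorname{diag}(1,u)$ to reduce to the matrix $\left[\begin{smallmatrix}1 & 1\\ -1 & xu\end{smallmatrix}\right]$ and then tests against $(k,k)$, which is exactly your test vector $(u^*w,w)$ under the substitution $k=u^*w$. The only cosmetic difference is that you work with $x-u^*$ directly, whereas the paper bounds $\|xu-1\|$ and leaves the (trivial) identification $\|xu-1\|=\|x-u^*\|$ implicit.
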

\begin{proof}
As $$ \left[ \begin{array}{cc} u^* & 0 \\ 0 &
    1 \end{array}\right] \left[ \begin{array}{cc} u & 1 \\ -1 &
    x \end{array}\right] \left[ \begin{array}{cc} 1 & 0 \\ 0 &
    u \end{array}\right]= \left[ \begin{array}{cc} 1 & 1 \\ -1
    & xu \end{array}\right],$$ we have
    $$ \left\|
\left[ \begin{array}{cc} u & 1 \\ -1 &
    x \end{array}\right] \right\| = \left\|  \left[ \begin{array}{cc} 1 & 1 \\ -1
    & xu \end{array}\right] \right\|.$$
    Let $k\in K$, then $$ \left\|
\left[ \begin{array}{cc} 1 & 1 \\ -1
    & xu \end{array}\right]
\left[ \begin{array}{c} k \\ k \end{array}\right]\right\| \leq
c\sqrt{2} \left\| \left[ \begin{array}{c} k
    \\ k \end{array}\right]\right\|.$$ Hence $\| xu(k)-k \|^2
+ 4\| k \|^2 \leq 4c^2\|k\|^2$, which gives $\| xu-1 \| \leq
2\sqrt{c^2-1}$.
\end{proof}

\begin{lem}\label{unitmult} Let $u,\, v$ be two unitaries in $\bB(K)$.
 Let $x \in \bB(K)$ and $c \geq 1 $ such that
$$\left\| \left[ \begin{array}{cc} u  & x  \\ -1 & v \end{array}\right] \right\| \leq c\sqrt{2},$$
then $\big\| x-uv \big\| \leq 2\sqrt{c^2-1} .$
\end{lem}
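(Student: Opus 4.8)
The plan is to deduce this from \lemref{invmult} by a single left multiplication that normalises the two unitary corners. Recall that in $\bM_2(\bB(K)) = \bB(K \oplus^2 K)$ multiplication by a unitary is an isometry, so any such manoeuvre preserves the hypothesis on the norm of the matrix.

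Concretely, I would left-multiply the given matrix by the block unitary
$$ W = \left[ \begin{array}{cc} 0 & v^* \\ -u^* & 0 \end{array} \right], \qquad W^* = \left[ \begin{array}{cc} 0 & -u \\ v & 0 \end{array} \right], $$
which is indeed unitary since $u,v$ are. A direct computation gives
$$ W \left[ \begin{array}{cc} u & x \\ -1 & v \end{array} \right] = \left[ \begin{array}{cc} -v^* & 1 \\ -1 & -u^* x \end{array} \right], $$
and the left-hand side has the same norm as the original matrix, hence $\le c\sqrt 2$. The point is that the off-diagonal entries have become $1$ and $-1$, the upper-left corner $-v^*$ is a unitary, and the lower-right corner $-u^* x$ is arbitrary: this is exactly the shape required to invoke \lemref{invmult}.

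Applying \lemref{invmult} with the unitary $-v^*$ in place of $u$ and $-u^* x$ in place of $x$ yields
$$ \bigl\| -u^* x - (-v^*)^* \bigr\| = \bigl\| v - u^* x \bigr\| \le 2\sqrt{c^2-1}. $$
Multiplying inside the norm by the unitary $u$ (which does not change the norm) gives $\| uv - x \| \le 2\sqrt{c^2-1}$, as desired.

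The only thing to get right is the choice of normalising unitary $W$, and this is a matter of bookkeeping rather than a genuine obstacle; once the two unitaries are rotated into the corners the statement is immediate from \lemref{invmult}. Should one prefer to avoid the reduction altogether, the same estimate follows by mimicking the proof of \lemref{invmult} directly: testing the matrix on vectors of the form $\left[ \begin{smallmatrix} vk \\ -k \end{smallmatrix} \right]$ produces $(uv-x)k$ in the top coordinate and $-2vk$ in the bottom one, and the resulting inequality $\| (uv-x)k \|^2 + 4\|k\|^2 \le 4c^2 \|k\|^2$ gives the bound after taking the supremum over $k$.
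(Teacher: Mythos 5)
Your proof is correct, and your primary route is genuinely different from the paper's. The paper does not deduce this lemma from \lemref{invmult}; it proves it in parallel, by the two-sided normalisation
$$ \left[ \begin{array}{cc} u^* & 0 \\ 0 & 1 \end{array}\right] \left[ \begin{array}{cc} u & x \\ -1 & v \end{array}\right] \left[ \begin{array}{cc} 1 & 0 \\ 0 & v^* \end{array}\right]= \left[ \begin{array}{cc} 1 & u^*xv^* \\ -1 & 1 \end{array}\right], $$
which reduces to the case $u=v=1$, and then repeats the test-vector estimate on $\left[ \begin{smallmatrix} -k \\ k \end{smallmatrix}\right]$. You instead use a one-sided multiplication by the off-diagonal block unitary $W$, which rotates the matrix into exactly the shape of \lemref{invmult} and lets you quote that lemma as a black box; all your computations check out ($W$ is unitary, $WM$ has the stated form, $(-v^*)^*=-v$, and unitary invariance of the norm converts $\|v-u^*x\|$ into $\|uv-x\|$). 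What your reduction buys is economy: a single vector estimate serves both lemmas, and the second becomes pure bookkeeping. What the paper's parallel treatment buys is self-containedness and symmetry between the two statements, each proof being only a few lines anyway. Your fallback argument at the end is, after a change of variables, precisely the paper's proof in disguise: testing the original matrix on $\left[ \begin{smallmatrix} vk \\ -k \end{smallmatrix}\right]$ amounts to testing the paper's normalised matrix on $\left[ \begin{smallmatrix} -k \\ k \end{smallmatrix}\right]$, since the diagonal unitaries absorb into the test vector and into the output without changing norms.
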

\begin{proof}
Note that $$ \left[ \begin{array}{cc} u^* & 0 \\ 0 &
    1 \end{array}\right] \left[ \begin{array}{cc} u & x \\ -1 &
    v \end{array}\right] \left[ \begin{array}{cc} 1 & 0 \\ 0 &
    v^* \end{array}\right]= \left[ \begin{array}{cc} 1 & u^*xv^* \\ -1
    & 1 \end{array}\right],$$ hence without loss of generality we can
assume that $u=v=1$. Take $k\in K$, then $$ \left\|
\left[ \begin{array}{cc} 1 & x \\ -1 & 1 \end{array}\right]
\left[ \begin{array}{c} -k \\ k \end{array}\right]\right\| \leq
c\sqrt{2} \left\| \left[ \begin{array}{c} -k
    \\ k \end{array}\right]\right\|.$$ As in the previous lemma, this implies $\| x-1 \| \leq
2\sqrt{c^2-1}$.
\end{proof}

We are ready to prove Theorem \ref{main1}.\\
\begin{proof}[Proof of 1.\ of Theorem \ref{main1}]
The first step of the proof is to replace $T$ by a suitable unital algebra homomorphism. Let $h \in H$ be viewed (via the universal representation) as a character of $B(H)$, i.e. $\omega_h \in \sigma(B(H)) \subset W^*(H)$. As $T$ is an algebra homomorphism, $T^*(\omega_h) \in \sigma(A(G)) \cup \{0\}$. It is well-known that  the spectrum of a Fourier algebra can be identified with the underlying group, i.e. $\sigma(A(G)) \simeq G$ (via the regular representation, see \cite{E}). Hence we consider the subset of $H$ defined by
$$\Omega_T=\{ h \in H ~:~  T^*(\omega_h) \in G \}=\{ h \in H ~:~  T^*(\omega_h) \neq 0 \}.$$
As $T$ is nonzero, the set $\Omega_T$ is not empty. Let $h_0 \in \Omega_T$ and define an algebra homomorphism $S:A(G) \to B(H)$ by $$S(f)=h_0 \cdot T(T^*(h_0) \cdot f), \qquad f \in A(G).$$
Then $S^*: W^*(H) \to VN(G)$ is a linear isomorphism which is unital,
 \begin{align*}S^*(\omega_{e_H})(f) = (Sf)(e_H) =&  h_0^{-1}\! \cdot \,T\big(T^*(h_0)  \cdot f \big)(e_H)\\
=& T\big(T^*(h_0)  \cdot f \big)(h_0) \\
 = & (T^*(h_0)  \cdot f)(T^*(h_0) )= f(e_G). \end{align*}
Since the left translation are completely isometric on Fourier-Stieltjes algebras, we have
$$\| \id_{\mathbb{M}_2} \otimes S^* \|=\| \id_{\mathbb{M}_2} \otimes T^* \| =\| \id_{\mathbb{M}_2} \otimes T \| <\frac {\sqrt 5}{2}.$$
As it was done for $T$, consider
$$\Omega_S=\{ h \in H ~:~  S^*(\omega_h) \in G \}=\{ h \in H ~:~  S^*(\omega_h) \neq 0 \}.$$
The last equality shows that $\Omega_S$ is open. But actually, as $S$ is obtained from $T$ by composition with two left translations and since the adjoint of a left translation preserves the group, we actually have $\Omega_S=\Omega_T$.
Denote $s:\Omega_S \to G$ the restriction of $S^*$ to  $\Omega_S$, i.e. for $h \in \Omega_S$, $S^*(\omega_{h})=\lambda_{s(h)}$ . We want to show that $\Omega_S$ is a subgroup of $H$ and $s$ is a continuous group homomorphism. The map $s$ is clearly continuous, as the restriction of a $w^*$-continuous map. Let us first prove that $\Omega_S$ is stable under taking inverses. Let $h \in \Omega_S$,
 \begin{align*} \left\| \left[ \begin{array}{cc} \lambda_{s(h)} & 1 \\ -1 &
    S^*(\omega_{h^{-1}}) \end{array}\right]  \right\| &=
    \left\| \left[ \begin{array}{cc} S^*(\omega_{h}) & S^*(1) \\ S^*(-1) &
    S^*(\omega_{h^{-1}}) \end{array}\right]  \right\| \\
    &\leq \| \id_{\mathbb{M}_2} \otimes S^* \|
    \left\| \left[ \begin{array}{cc} \omega_{h} & 1 \\ -1 &
    \omega_{h^{-1}} \end{array}\right]  \right\|. \end{align*}
    But $$\left\| \left[ \begin{array}{cc} \omega_{h} & 1 \\ -1 &
    \omega_{h^{-1}} \end{array}\right]  \right\|=\sqrt{2},$$ so we can apply Lemma \ref{invmult} to obtain the following inequality
    $$\big\| S^*(\omega_{h^{-1}})-\lambda_{s(h)^{-1}} \big\| \leq 2\sqrt{\| \id_{\mathbb{M}_2} \otimes S^* \|^2-1}<1.$$
    As $\lambda_{s(h)^{-1}}$ is on the unit sphere of $VN(G)$, $S^*(\omega_{h^{-1}})$ is not zero, that is $h^{-1} \in \Omega_S$ and $S^*(\omega_{h^{-1}})=\lambda_{s(h^{-1})}$. Hence
    $$\big\| \lambda_{s(h)^{-1}}-\lambda_{s(h^{-1})} \big\| <1.$$ If $\lambda_{s(h)^{-1}}$ and $\lambda_{s(h^{-1})}$ were distinct, then we would have $\| \lambda_{s(h)^{-1}}-\lambda_{s(h^{-1})} \|_{VN(G)} \geq \sqrt 2 $ (see Lemma \ref{norm-in-vng}), so  $\lambda_{s(h)^{-1}} = \lambda_{s(h^{-1})}$.\\
Now let us prove that $\Omega_S$ is stable by multiplication and $s$ is multiplicative. Let $h_1, h_2 \in \Omega_S$,
\begin{align*} \left\| \left[ \begin{array}{cc} \lambda_{s(h_1)} & S^*(\omega_{h_1h_2}) \\ -1 &
    \lambda_{s(h_2)} \end{array}\right]  \right\| &=
    \left\| \left[ \begin{array}{cc} S^*(\omega_{h_1}) & S^*(\omega_{h_1h_2}) \\ S^*(-1) &
    S^*(\omega_{h_2}) \end{array}\right]  \right\| \\
    &\leq
     \| \id_{\mathbb{M}_2} \otimes S^* \|
    \left\| \left[ \begin{array}{cc} \omega_{h_1} & \omega_{h_1h_2} \\ -1 &
    \omega_{h_2} \end{array}\right]  \right\| \end{align*}
    and $$\left\| \left[ \begin{array}{cc} \omega_{h_1} & \omega_{h_1h_2} \\ -1 &
    \omega_{h_2} \end{array}\right]  \right\|=\sqrt{2},$$ we can apply Lemma \ref{unitmult} to obtain
    \begin{align*} \big\| S^*(\omega_{h_1h_2})-\lambda_{s(h_1)s(h_2)} \big\|  \leq 2\sqrt{\| \id_{\mathbb{M}_2} \otimes S^* \|^2-1}<1.\end{align*}
    As above, we get $S^*( \omega_{h_1h_2})$ is not zero, so the product $h_1h_2 \in \Omega_S$ and 
     \begin{align*} \big\| \lambda_{s(h_1 h_2)}-\lambda_{s(h_1)s(h_2)} \big\|&=\big\| \lambda_{s(h_1 h_2)}-\lambda_{s(h_1)}\lambda_{s(h_2)} \big\|\\ &=\big\| S^*( \omega_{h_1h_2})-\lambda_{s(h_1)s(h_2)} \big\|  <1. \end{align*}
Therefore $s(h_1h_2)=s(h_1)s(h_2)$ by Lemma \ref{norm-in-vng}. Finally, $\Omega_S $ is a subgroup of $H$ and $s$ is a group homomorphism, this finishes the proof with $\Omega=\Omega_S $, $t_0 =T^*(h_0)$ and $\tau=s$. By Proposition 3.1 in \cite{IS}, $T$ is completely contractive.
\end{proof}
\begin{proof}[Proof of 2.\ of Theorem \ref{main1}]
Denote by $t$ the restriction of $T^*$ to the spectrum of $A(H)$ (which can be identified with $H$). As $T$ is an algebra isomorphism, $t$ is a homeomorphism between $H$ and $G$ (not necessarily group morphism). Let us define $S:A(G) \to A(H)$ by $$S(f)=T(t(e_H) \cdot f), \qquad f \in A(G).$$
Then $S^*: VN(H) \to VN(G)$ is a unital linear isomorphism and it satisfies
$\| \id_{\mathbb{M}_2} \otimes S^* \| <\sqrt {{3}/{2}}$. The restriction of $S^*$ to the spectrum of $A(H)$, denoted by $s:H \to G$ is also a homeomorphism. Form here we just need to prove that $s$ is a group morphism. Replacing the universal representation by the regular one in the second part of the proof of 1. Theorem \ref{main1}, the computation lead to $$\big\| \lambda_{s(h_1h_2)}-\lambda_{s(h_1)s(h_2)} \big\| \leq 2\sqrt{\| \id_{\mathbb{M}_2} \otimes S^* \|^2-1}<\sqrt 2.$$
By Lemma  \ref{norm-in-vng}, this gives $s(h_1h_2)=s(h_1)s(h_2)$ and it finishes the proof with $t_0=t(e_H)$ and $\tau=s$.
\end{proof}

\begin{proof}[Proof of 3.\ of Theorem \ref{main1}] As in case 2., set similarly $t: \sigma(B(H)) \to \sigma(B(G))$, $t(v)=T^*(v)$. A priori, $t$ might not map $H$ into $G$. Now for $h \in H$, $T^*(h) \in \sigma(B(G))$ and (by \cite{Wa} Theorem 1) there are two cases: either $T^*(h) \in G$ or $T^*(h) \in A(G)^0$, where $A(G)^0$ denotes the annihilator of $A(G)$ inside $B(G)^*=W^*(G)$. Set $$\Omega_T = \{ h\in H: t(h)\in G\}=\{ h \in H ~:~  t(h) \notin A(G)^0 \}.$$ We claim that $t(\Omega_T)$ is dense in $G$. Otherwise, as $A(G)$ is regular (see \cite{E}), there would exist $f\in A(G) \backslash \{0\}$ such that $f_{| t(\Omega_T)} =0$. But this would imply $T(f)=0$, which is impossible since $T$ is injective.
Hence, $t(\Omega_T)$ is dense in $G$. In particular, this implies that $\Omega_T \not= \emptyset$. Pick $h_0\in \Omega_T$ and as in case 1. define $S:B(G) \to B(H)$ by
$$
S(f) = h_0^{-1}\! \cdot \,T\big(t(h_0)  \cdot  f\big), \qquad f \in B(G).
$$
Then $S^*: W^*(H) \to W^*(G)$ is a unital completely bounded isomorphism.
  Consider again,
$$\Omega_S=\{ h \in H ~:~  S^*(h) \in G \}=\{ h \in H ~:~  S^*(h) \notin A(G)^0 \}$$ and denote $s:\Omega_S \to G$ the restriction of $S^*$ to  $\Omega_S$.
As in 1.,  actually $\Omega_S=\Omega_T$ and the same argument shows that $s(\Omega_S)$ is dense in $G$.
Replacing the regular representation by the universal  one in the proof of 2. Theorem \ref{main1} shows that $\Omega_S$ is a subgroup of $H$ and $s$ is a bicontinuous group isomorphism. Hence, to finish the proof, we just need to show that $\Omega_S = H$. Here we can follow the argument at the end of the proof of Theorem 6.1 in \cite{Ph} (we reproduce it here for completeness), $s(\Omega_S)$ is an open dense locally compact subgroup of $G$, so $s(\Omega_S)=G$. Let us denote $X$ the $w^*$-closure of the span of $s(\Omega_S)$ inside $W^*(H)$. As $s(\Omega_S)$ is dense in $G$, we get $S^*(X)=W^*(G)$, so $X=W^*(H)$. Therefore $\Omega_S = H$, otherwise we could find $f\in A(G) \backslash \{0\}$ such that $f_{| \Omega_S} =0$ (since $\Omega_S$ is open). But this would imply $\langle f,x \rangle=0$ for any $x\in X$, so $f=0$, this is a contradiction.
\end{proof}

\begin{cor} Let $G,H$ be two locally compact groups. Let $T:A(G) \to B(H)$ be an injective algebra homomorphism. If  $\| \id_{\mathbb{M}_2} \otimes T \|  <\sqrt {5}/{2}$, then $T$ is actually completely isometric.
\end{cor}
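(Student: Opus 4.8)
The plan is to read off everything from part~1 of \thmref{main1}. Applying it to the nonzero homomorphism $T$, we obtain an open subgroup $\Omega\le H$, elements $t_0\in G$, $h_0\in H$ and a continuous homomorphism $\tau\colon\Omega\to G$ with
$$ T(f)(h)=f\big(t_0\,\tau(h_0h)\big)\ \text{for } h\in h_0^{-1}\Omega,\qquad T(f)(h)=0\ \text{otherwise},$$
and we already know $\|T\|_{cb}\le 1$. So the only thing left to prove is the matching lower bound, i.e.\ that $T$ is completely isometric, and I would obtain this by factoring $T$ through complete isometries.

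First I would use injectivity to pin down the image of $\tau$. If $t_0\tau(\Omega)$ were not dense in $G$, then, $A(G)$ being regular, there would be a nonzero $f\in A(G)$ vanishing on $t_0\tau(\Omega)$; but the displayed formula then gives $T(f)=0$, contradicting injectivity. Hence $\tau(\Omega)$ is dense in $G$ (this is exactly the regularity argument already used in the proof of part~3).

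Next I would factor $T=L_{h_0^{-1}}\circ E\circ\tau^{\sharp}\circ L_{t_0}$, where $L_{h_0^{-1}}$ is left translation on $B(H)$, $L_{t_0}$ is the corresponding translation on $A(G)$ (both completely isometric), $\tau^{\sharp}\colon A(G)\to B(\Omega)$ is the pullback $\tau^{\sharp}(f)=f\circ\tau$, and $E\colon B(\Omega)\to B(H)$ is extension by zero. The heart of the matter is that $\tau^{\sharp}$ is completely isometric: its adjoint is the normal unital $*$-homomorphism $\pi\colon W^*(\Omega)\to VN(G)$ sending the universal unitary $\omega_x$ of $\Omega$ to $\lambda_{\tau(x)}$ for $x\in\Omega$. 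Since $\tau(\Omega)$ is dense and $g\mapsto\lambda_g$ is weak$^*$-continuous, the range of $\pi$ generates $VN(G)$; being a normal $*$-homomorphism, $\pi$ has weak$^*$-closed range and is therefore onto. A surjective normal unital $*$-homomorphism is a complete quotient map, so its predual $\tau^{\sharp}$ is a complete isometry.

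Finally, for an open subgroup $\Omega\le H$ the extension-by-zero map $E\colon B(\Omega)\to B(H)$ is completely isometric (extension by zero preserves positive-definiteness on an open subgroup, and this passes to all matrix levels). Composing the four complete isometries shows $T$ is completely isometric. I expect the main obstacle to be this last point together with the care required in the factorization: one cannot simply argue that $T^*$ is a $*$-homomorphism, since multiplicativity fails precisely when $h_1,h_2\notin\Omega$ while $h_1h_2\in\Omega$. The multiplicative structure lives only at the level of $\Omega$, which is exactly why the $*$-homomorphism $\pi$ must be isolated on $W^*(\Omega)$ and the passage back to $H$ handled separately through $E$.
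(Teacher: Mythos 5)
Your argument is correct, and its engine is the same as the paper's: injectivity plus regularity of $A(G)$ forces the image of the open subgroup to be dense in $G$, and density makes the induced normal $*$-homomorphism onto $VN(G)$ surjective, hence a complete quotient map, which dualizes to a complete isometry. The packaging, however, is genuinely different. The paper never leaves $W^*(H)$: it restricts $S^*$ to the von Neumann subalgebra $M$ generated by $\omega(\Omega_S)$, notes that $S^*|_M\colon M\to VN(G)$ is a surjective normal $*$-homomorphism (your $\pi$ is exactly this map precomposed with the canonical quotient $W^*(\Omega)\to M$), and concludes that $\id_{\mathbb{M}_n}\otimes S^*$ carries the unit ball of $\mathbb{M}_n(W^*(H))$ onto that of $\mathbb{M}_n(VN(G))$; paired with the complete contractivity already supplied by part~1 of Theorem~\ref{main1}, this ball-surjectivity duality yields the lower bound $\|(\id_{\mathbb{M}_n}\otimes S)f\|\ge\|f\|$ with no mention of extension by zero. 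Your factorization $T=L_{h_0^{-1}}\circ E\circ\tau^{\sharp}\circ L_{t_0}$ is more structural, but it must pay for that with the extra claim that $E\colon B(\Omega)\to B(H)$ is completely isometric, and your one-line justification (positive-definiteness passing to matrix levels) is the thin spot of the proposal. The claim is true, and here is how to close it: the adjoint $E^*\colon W^*(H)\to W^*(\Omega)$ sends $\omega_h$ to $1_\Omega(h)\,\omega_h$, hence coincides with the compression $J^*\rho(\cdot)J$, where $\rho$ is the normal extension to $W^*(H)$ of the unitary representation of $H$ induced from the universal representation of $\Omega$, and $J$ is the isometric inclusion of the fibre over the identity coset (this uses that $\Omega$ is open, so $H/\Omega$ is discrete); thus $\|E\|_{cb}=\|E^*\|_{cb}\le 1$, and since the completely contractive restriction map $r\colon B(H)\to B(\Omega)$ satisfies $r\circ E=\id_{B(\Omega)}$, the map $E$ is completely isometric. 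Once this lemma is in place your proof is complete; what it buys is an explicit presentation of $T$ as a composition of four complete isometries, while the paper's duality argument buys economy, obtaining the lower bound without ever isolating $E$ (indeed, if you try to avoid the lemma by using only complete contractivity of the composite and dualizing, your argument collapses back into the paper's).
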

\begin{proof} As in case 1., we can define a new algebra homomorphism $S$ which is still injective. Following now case 3., injectivity implies that $s(\Omega_S)$ is dense in $G$ and now denoting $M$ the von Neumann algebra generated by $\omega(\Omega_S)$ inside $W^*(H)$, we get that the restriction $S^*_{|M}:M \to VN(G)$ is a surjective $*$-homomorphism. Hence for any $n \in \mathbb{N}$, $\id_{\mathbb{M}_n} \otimes S^*$ maps the unit ball of $\mathbb{M}_n(W^*(H))$ onto the unit ball of $\mathbb{M}_n(VN(G))$, so $S$ is completely isometric.
\end{proof}

\begin{rem} The fact that tensorization by $2 \times 2$ matrices is sufficient to obtain a completely contractive or completely isometric conclusion can be compared with the main result of \cite{JRS}, where the authors prove that a $2$-isometry between noncommutative $L_p$-spaces is necessarily a complete isometry.
\end{rem}

\begin{rem} In the case 2.\ of Theorem \ref{main1}, when $G,H$ are abelian,  we actually obtain that if there is an algebra isomorphism $T:L_1(\hat G) \to L_1(\hat H)$ with  $\| T \|  < \sqrt {{3}/{2}}$, then $G$ and $H$ are topologically isomorphic. (In the abelian case, we do not need tensorization by $2 \times 2$ matrices, because $L_1$ carries here its maximal operator space structure, see \cite{ER1}). Therefore we recover Kalton\&Wood's result with a worse bound, they prove that $\sqrt{2}$ is the optimal bound for $L_1$ over locally compact abelian groups (see Example 1 \cite{KW}). It would be interesting to investigate if $\sqrt { {3}/{2}}$ is the optimal constant $C>1$ in order to have the following implication: for any locally compact groups $G$, $H$, if there is an algebra isomorphism $T:A(G) \to A(H)$ with  $\| T \|_{cb}  < C$, then $G$ and $H$ are topologically isomorphic. Similar investigation could be done for cases (1) and (3).
\end{rem}
\begin{rem} In Example 2 \cite{KW}, N. Kalton and G. Wood showed that $(1+\sqrt {3})/2$ is the optimal constant $C>1$ in order to have the following implication: for any locally compact abelian groups $G$, $H$, for any algebra isomorphism $T:L_1(G) \to L_1(H)$,  if  $\| T \|  < C$, then $T$ is isometric. For this implication too, determining the optimal bound in our non-abelian cases would be interesting as well.
\end{rem}
If we restrict to the class of amenable groups, the bound $\sqrt{5}/2$ in case (1) can be improved:
\begin{prop} Let $G,H$ be two locally compact  groups with $G$ amenable and $T:A(G) \to B(H)$ a nonzero algebra homomorphism. If $\|  T \|_{cb}  < {2}/{\sqrt 3}$, then $\|  T \|_{cb}  \le 1$.
\end{prop}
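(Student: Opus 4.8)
The plan is to run the proof of part~1 of \thmref{main1} essentially verbatim, using amenability of $G$ only to relax the constant at the single point where $\sqrt5/2$ is actually needed. First I would perform the same reduction: replacing $T$ by $S(f)=h_0^{-1}\!\cdot T(T^*(h_0)\cdot f)$ for a fixed $h_0\in\Omega_T$, I obtain a \emph{unital} homomorphism $S:A(G)\to B(H)$ whose adjoint $S^*:W^*(H)\to VN(G)$ satisfies $\|\id_{\mathbb{M}_2}\otimes S^*\|\le\|S^*\|_{cb}=\|T\|_{cb}<2/\sqrt3$, and for which $\Omega_S=\Omega_T=\{h:S^*(\omega_h)=\lambda_{s(h)}\neq0\}$ is open. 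Lemmas~\ref{invmult} and~\ref{unitmult} then give, for $h,h_1,h_2\in\Omega_S$,
$$\big\|S^*(\omega_{h^{-1}})-\lambda_{s(h)^{-1}}\big\|\le 2\sqrt{\|\id_{\mathbb{M}_2}\otimes S^*\|^2-1},\qquad \big\|S^*(\omega_{h_1h_2})-\lambda_{s(h_1)s(h_2)}\big\|\le 2\sqrt{\|\id_{\mathbb{M}_2}\otimes S^*\|^2-1}.$$
Since $2\sqrt{c^2-1}<\sqrt2$ whenever $c<\sqrt{3/2}$, and $2/\sqrt3<\sqrt{3/2}$, the right-hand sides are $<\sqrt2$; so by \lemref{norm-in-vng} the element $S^*(\omega_{h^{-1}})$ (resp.\ $S^*(\omega_{h_1h_2})$) equals \emph{either} the expected $\lambda_{s(h)^{-1}}$ (resp.\ $\lambda_{s(h_1)s(h_2)}$) \emph{or} $0$. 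Thus the whole argument reduces to excluding the value $0$, which is precisely where \thmref{main1}(1) had to spend the stronger bound $\sqrt5/2$.

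Here amenability enters. By Hulanicki's theorem the trivial representation is weakly contained in $\lambda$, so there is a character (co-unit) $\e$ on $C^*_r(G)$ with $\e(\lambda_g)=1$ for all $g$; I extend it to a state $\tilde\e$ on $VN(G)$ and set $\phi=\tilde\e\circ S^*$. Then $\phi$ is a unital functional on $W^*(H)$ with $\|\phi\|\le\|S^*\|<2/\sqrt3$ and $\phi(\omega_h)=\tilde\e(\lambda_{s(h)})=1$ for every $h\in\Omega_S$. For stability under inverses I restrict $\phi$ to the commutative $C^*(\omega_h)\cong C(\operatorname{sp}\omega_h)$, $\operatorname{sp}\omega_h\subseteq\mathbb{T}$, obtaining a complex measure $\mu$ with $\int d\mu=\phi(1)=1$ and $\int z\,d\mu=\phi(\omega_h)=1$. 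If $h^{-1}\notin\Omega_S$, i.e.\ $S^*(\omega_{h^{-1}})=0$, then $\int\bar z\,d\mu=\phi(\omega_{h^{-1}})=0$. But any measure on $\mathbb{T}$ with $\int d\mu=\int z\,d\mu=1$ and $\int\bar z\,d\mu=0$ satisfies $\|\mu\|\ge 2/\sqrt3$: this is the optimal value of a two-point extremal problem, attained by the masses $\tfrac12\mp\tfrac{i}{2\sqrt3}$ at $e^{\pm i\pi/3}$, the lower bound coming from pairing $\mu$ with a suitable degree-one trigonometric polynomial of sup-norm one. This contradicts $\|\mu\|\le\|\phi\|<2/\sqrt3$, so $h^{-1}\in\Omega_S$, and \lemref{norm-in-vng} forces $s(h^{-1})=s(h)^{-1}$. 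It is this step that pins down the constant $2/\sqrt3$.

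For stability under products I would try to exclude $\phi(\omega_{h_1h_2})=0$ in the same spirit, and this is where I expect the main obstacle. The clean measure argument is unavailable because $\omega_{h_1h_2}$ does not lie in a commutative C*-algebra on which $\phi$ is already known. The tool I would use is the (non-positive) representation $\phi(\cdot)=\langle\pi(\cdot)\xi,\eta\rangle$ with $\|\xi\|=\|\eta\|=\sqrt{\|\phi\|}$ and $\langle\xi,\eta\rangle=\phi(1)=1$, so that $\|\xi-\eta\|^2=2(\|\phi\|-1)$ is small. Writing $p=\pi(\omega_{h_1})$, $q=\pi(\omega_{h_2})$ and $b=(q-1)\xi$, the identities $\phi(\omega_{h_1})=\phi(\omega_{h_2})=\phi(\omega_{h_1}^{*})=\phi(\omega_{h_2}^{*})=1$ (the last two already established) become orthogonality relations $b\perp\eta$ and $(p-1)\eta\perp\xi$, and yield $\phi(\omega_{h_1}\omega_{h_2})=1+\langle pb,\eta\rangle$. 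The task is to show $\langle pb,\eta\rangle\neq-1$ under $\|\phi\|<2/\sqrt3$; I expect the sharp threshold to again be $2/\sqrt3$, arising from the same sixth-root geometry, but proving it needs precise control of $\|b\|$ and the relevant angles rather than a crude Cauchy--Schwarz estimate. As an alternative I would note that, once inverses are known, $h\mapsto S^*(\omega_h)$ is weak$^*$-continuous while $g\mapsto\lambda_g$ is weak$^*$-continuous, which makes $\{h_2\in\Omega_S:h_1h_2\in\Omega_S\}$ clopen in $\Omega_S$ and reduces the product step to the totally disconnected quotient $H/H^e$.

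Once $\Omega_S$ is shown to be an open subgroup of $H$ and $s:\Omega_S\to G$ a continuous homomorphism, $S$ — hence $T$ — has the canonical form of \thmref{main1}(1), and Proposition~3.1 of \cite{IS} gives $\|T\|_{cb}\le1$, as required.
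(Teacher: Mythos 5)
Your reduction and your inverse step are essentially correct, and the latter is genuinely nice. The dichotomy you extract from Lemmas \ref{invmult}, \ref{unitmult} and \ref{norm-in-vng} (each of $S^*(\omega_{h^{-1}})$, $S^*(\omega_{h_1h_2})$ is either the expected group element or $0$, since $2\sqrt{c^2-1}<\sqrt2$ when $c<2/\sqrt3$) is exactly right, and your two-point extremal problem is correctly solved: any complex measure with $\int d\mu=\int z\,d\mu=1$ and $\int\bar z\,d\mu=0$ satisfies $\|\mu\|\ge 2/\sqrt3$ --- a valid dual certificate is $f(z)=\tfrac{\sqrt3}{9}(4+2z-\bar z)$, which has sup-norm $1$ on the circle and integrates to $2/\sqrt3$ against any such $\mu$ --- with equality for the measure you exhibit. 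One small repair: for non-discrete $G$ one has $\lambda_g\notin C^*_r(G)$, so the state $\tilde\varepsilon$ on $VN(G)$ with $\tilde\varepsilon(\lambda_g)=1$ should be produced either by extending the co-unit through the multiplier algebra $M(C^*_r(G))\subset VN(G)$, or as a weak$^*$ cluster point of vector states $\langle\,\cdot\,\xi_i,\xi_i\rangle$ along a Reiter net; this is routine.

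The proof nevertheless does not close: the product step, which you yourself flag as the main obstacle, is a genuine gap and not a technicality. Without excluding $S^*(\omega_{h_1h_2})=0$ you do not know that $\Omega_S$ is a subgroup nor that $s$ is multiplicative, so the structural form of $T$ and the appeal to Proposition 3.1 of \cite{IS} are unavailable, and no bound on $\|T\|_{cb}$ follows. Your Hilbert-space sketch stops precisely at the inequality that needs proving ($\langle pb,\eta\rangle\neq-1$), and the quantities $\|b\|$ and $\|(\pi(\omega_{h_1})^*-1)\eta\|$ are not controlled by the data you have. The topological fallback also fails: weak$^*$-continuity of $h\mapsto S^*(\omega_h)$ shows only that the complement of $\Omega_S$ is closed (i.e.\ $\Omega_S$ is open, already known); since $\lambda(G)$ is in general not weak$^*$-closed in $VN(G)$ (for $G=\mathbb Z$ one has $\lambda_n\to0$ weak$^*$ by Riemann--Lebesgue), the set $\{h_2\in\Omega_S:h_1h_2\in\Omega_S\}$ is open but need not be closed, and even granted clopenness the reduction to $H/H^e$ would settle nothing for totally disconnected $H$. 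For comparison, the paper's proof is entirely different and bypasses all of this: amenability of $G$ is spent on Theorem 3.7 of \cite{IS}, which turns the completely bounded homomorphism $T$ into a map $\alpha:H\to G$ whose graph gives a nonzero idempotent $1_{\Gamma_\alpha}\in B(H_d\times G_d)$ with $\|1_{\Gamma_\alpha}\|\le\|T\|_{cb}<2/\sqrt3$; Theorem 3.3 of \cite{S} (after Livshits \cite{Li}) then forces $\|1_{\Gamma_\alpha}\|=1$, hence $\alpha$ is affine and Proposition 3.1 of \cite{IS} gives $\|T\|_{cb}\le1$. The threshold $2/\sqrt3$ is thus imported from the idempotent-multiplier theorem; the reappearance of the same constant in your commutative extremal problem is suggestive, but completing your route would require solving the noncommutative (product) analogue of that extremal problem, which is exactly what is missing.
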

\begin{proof} Following the proof of Theorem 3.7 in \cite{IS} (and its notation), we obtain a nonzero idempotent $1_{\Gamma_\alpha} \in B(H_d \times G_d)$ with $\|  1_{\Gamma_\alpha} \|  \leq  \|  T \|_{cb} < {2}/{\sqrt 3}$, where $\alpha$ is the map from $H$ to $G$ (induced by $T^*$) and $\Gamma_\alpha$ its graph. But the Fourier-Stieltjes algebra embeds contractively into the set of all completely bounded multipliers of the corresponding Fourier algebras, we can then apply Theorem 3.3 in \cite{S} (which is based on \cite{Li}) to obtain that $\|  1_{\Gamma_\alpha} \|=1$. Hence $\alpha$ is an affine map and so $\|  T \|_{cb}  \leq 1$ by Proposition 3.1 in \cite{IS}.
\end{proof}
In \cite{L}, A.T.-M. Lau considers the second conjugate $A(G)^{**}=VN(G)^*$ equipped with the Arens product. Next corollary can be compared with Theorem 5.3  \cite{L}. But this theorem will be improved in the next section.
\begin{cor} \label{l}Let $G,H$ be two discrete groups. Then $G$ and $H$ are isomorphic if and only if  there exists a surjective algebra isomorphism $T:A(G)^{**} \to A(H)^{**}$ such that $\| \id_{\mathbb{M}_2} \otimes T \| <\sqrt {{3}/{2}}$.
\end{cor}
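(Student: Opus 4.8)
The plan is to reduce the statement to part~(2) of \thmref{main1} by recognizing the group inside the bidual spectrally, and then to re-run the two-by-two matrix argument essentially verbatim. The backward implication is the easy one: a topological isomorphism $\tau\colon H\to G$ induces a completely isometric algebra isomorphism $A(G)\to A(H)$, and bidualizing gives a completely isometric (hence $\|\id_{\mathbb{M}_2}\otimes T\|=1<\sqrt{3/2}$) algebra isomorphism $A(G)^{**}\to A(H)^{**}$ for the Arens product. For the forward implication, recall that $A(G)^{**}=VN(G)^*$ and that, \emph{because $G$ is discrete}, $A(G)$ sits inside $A(G)^{**}$ as a closed two-sided ideal. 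Using only this ideal property, one checks that the characters of $A(G)^{**}$ (for the first Arens product) which do not annihilate $A(G)$ are exactly the evaluations $\hat g\colon F\mapsto F(\lambda_g)$, $g\in G$: writing $\lambda_g:=\varphi|_{A(G)}\in\sigma(A(G))$ and choosing $a\in A(G)$ with $a(g)\neq0$, the ideal property shows that the Arens product $Fa$ lies in $A(G)$ for every $F$, whence $\varphi(F)\,a(g)=\varphi(Fa)=\lambda_g(Fa)=\hat g(F)\,a(g)$, so $\varphi=\hat g$. Thus $G$ is identified with the characters of $A(G)^{**}$ that are nonzero on the predual ideal, realized by $\{\lambda_g\}\subset VN(G)\hookrightarrow VN(G)^{**}=A(G)^{***}$.

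The heart of the matter is to show that $T^*$ carries the group characters of $A(H)^{**}$ onto those of $A(G)^{**}$, equivalently that $T$ maps the predual ideal $A(G)$ onto $A(H)$; granting this, $T^*$ restricts to a bijection $\tau\colon H\to G$ with $T^*(\lambda_h)=\lambda_{\tau(h)}$. Since $T$ is merely an abstract algebra isomorphism of the biduals and need not be $w^*$-continuous, this preservation is the \textbf{main obstacle}. I would establish it by the density argument used in the proof of part~(3) of \thmref{main1}: set $\Omega=\{h\in H:\ T^*(\hat h)\ \text{is a group character}\}$ and show, by regularity of $A(G)$, that $\{\tau(h):h\in\Omega\}$ is dense in $G$ — otherwise some nonzero $f\in A(G)$ would vanish on this set, forcing $T(f)$ to be annihilated by all the corresponding $\lambda_h$ and, after passing to the $w^*$-closure of their span in $VN(H)$ exactly as at the end of the proof of part~(3), forcing $f=0$. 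Running the same argument with $T^{-1}$, together with the ideal property, then upgrades $\Omega$ to all of $H$ and $\tau$ to a bijection onto $G$. It is precisely here that discreteness is used twice: to make $A(G)$ an ideal in its bidual and to keep the regularity machinery available.

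Once $\tau\colon H\to G$ is a bijection with $T^*(\lambda_h)=\lambda_{\tau(h)}$, the proof concludes as in part~(2) of \thmref{main1}. After composing $T$ with the bidualized, completely isometric automorphism induced by a left translation on $A(G)$, I may assume $\tau(e_H)=e_G$, so that $T^*$ fixes the identity $1=\lambda_e$ of the ambient von Neumann algebra. The embedding $VN(H)\hookrightarrow VN(H)^{**}=A(H)^{***}$ is completely isometric and $\|\id_{\mathbb{M}_2}\otimes T^*\|=\|\id_{\mathbb{M}_2}\otimes T\|<\sqrt{3/2}$, so for $h_1,h_2\in H$ the matrix identity and norm computation of \thmref{main1} (via \lemref{unitmult}) give
\[
\big\|\lambda_{\tau(h_1h_2)}-\lambda_{\tau(h_1)\tau(h_2)}\big\|\ \le\ 2\sqrt{\|\id_{\mathbb{M}_2}\otimes T^*\|^2-1}\ <\ \sqrt2 .
\]
By \lemref{norm-in-vng} (uniform discreteness of the $\lambda_g$'s) this forces $\tau(h_1h_2)=\tau(h_1)\tau(h_2)$, while the analogous computation through \lemref{invmult} handles inverses; hence $\tau$ is a group isomorphism and $G\cong H$. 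The discrete topology makes continuity automatic, so no further topological argument is required.
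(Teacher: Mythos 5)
Your overall architecture is the paper's: identify the groups spectrally inside the biduals, prove the dichotomy ``group character, or character annihilating the predual,'' use injectivity of $T$ to show $T^*$ maps $\{\hat h:h\in H\}$ onto $\{\hat g:g\in G\}$, and finish with \lemref{unitmult} and \lemref{norm-in-vng}. Your derivation of the spectral identification from the fact that $A(G)$ is a closed ideal in $A(G)^{**}$ for discrete $G$ is correct, and is a nice self-contained alternative to the paper's appeal to Lemma~5.1 of \cite{L}; the closing matrix-lemma step is also sound, since $VN(H)\hookrightarrow VN(H)^{**}$ is a unital complete isometry and \lemref{norm-in-vng} applies to the $\lambda_g$'s viewed inside $VN(G)\subset VN(G)^{**}$.

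The genuine gap is precisely at the step you single out as the heart of the matter, and the argument you propose there fails. In part~(3) of \thmref{main1}, the element being tested against the group lies in $B(H)=W^*(H)_*$, i.e.\ it is a \emph{normal} functional, so vanishing on the $w^*$-dense span of $\omega(H)$ does kill it. Here, by contrast, $T(f)$ lies in $A(H)^{**}=VN(H)^*$, a space of generally non-normal functionals on $VN(H)$. Knowing $\langle T(f),\lambda_h\rangle=0$ for all $h\in H$ only says that $T(f)$ annihilates the \emph{norm}-closed span of the $\lambda_h$'s, which is $C^*_r(H)$; for infinite discrete $H$ the annihilator $C^*_r(H)^{\perp}\subset VN(H)^*$ is enormous, and the $w^*$-closure of $\mathrm{span}\{\lambda_h\}$ that a pairing with $T(f)$ can see, namely the one taken in $VN(H)^{**}$, is only $C^*_r(H)^{**}$, not $VN(H)^{**}$. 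Worse, the conclusion ``$T(f)=0$'' you are aiming for is simply unreachable: taking $f=\delta_{g_0}$ with $g_0\notin\tau(\Omega)$, injectivity gives $T(f)\neq 0$, and the character $(T^*)^{-1}(\widehat{g_0})$ takes the value $1$ on $T(f)$, so no linear density argument can produce a contradiction.

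The step can be repaired, but it needs a multiplicative input rather than $w^*$-density. For discrete $H$ one checks on finitely supported functions (dense in $A(H)$) that $x\cdot a\in C^*_r(H)$ for every $x\in VN(H)$, $a\in A(H)$; unwinding the definition of the first Arens product, this shows that the product of any element of $A(H)^{**}$ with any element of $C^*_r(H)^{\perp}$ vanishes. Hence if $T(f)\in C^*_r(H)^{\perp}$ then $T(f^2)=T(f)\,T(f)=0$, so $f^2=0$ by injectivity, and $f=0$ because elements of $A(G)$ are functions on $G$ --- this is the contradiction your density argument needs (equivalently: $T(\delta_{g_0})$ would be a nonzero idempotent in an ideal on which the Arens product is identically zero). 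Without this extra ingredient, or an equivalent use of the structure theory of $A(H)^{**}$ for discrete groups (which is what the paper's citation of \cite{L} encodes), your proof of surjectivity of $\tau$ --- and with it the whole forward implication --- is incomplete; the same caveat applies to your parallel claim for $T^{-1}$.
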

\begin{proof}  The spectrum of $A(G)^{**}$ possesses the same properties as the spectrum of a Fourier-Stieltjes algebra.
By Lemma 5.1 in \cite{L}, we have the identification $$H=\sigma(A(H)^{**}) \cap VN(H)^{**}_u=\sigma(A(H)^{**}) \cap VN(H)^{**}_r,$$
where $VN(H)^{**}_u$ (resp. $VN(H)^{**}_r$) denotes the unitary group (resp. the invertible group) of the von Neumann algebra $VN(H)^{**}$. Also analyzing the proof of Lemma 5.1 in \cite{L} (the ideal $M$ defined in the proof of this lemma is a regular maximal left ideal of $VN(G)^{*}$ even if $u$ is not invertible), we get $$\sigma(A(G)^{**}) \backslash G \subset A(G)^0,$$
where $A(G)^0$ is the annilihator of $A(G)$ inside $VN(G)^{**}$.
Consider the linear isomorphism $T^*:VN(H)^{**} \to VN(G)^{**}$. Then as in case 3. of the previous theorem, either $T^*(h) \in G$ or $T^*(h) \in A(G)^0$. Still using injectivity of $T$, we get $T^*(\Omega_T)=G$ (because $G$ is discrete). Similar reasoning on $T^{-1*}$ leads to $\Omega_T=H$ and $T^*(H)=G$. Therefore as in case 2., we can define $S$ and apply Lemma \ref{unitmult} to prove that the restriction of $S^*$ to $H$ is a group isomorphism onto $G$.
\end{proof}
\begin{rem} Discreteness of the groups is necessary to use Lemma 5.1 of \cite{L}, see Theorem 4.8 \cite{L}.
\end{rem}



\section{The nearly isometric case}
Recall that the Jordan product of two elements $a,a'$ in a $C^*$-algebra $A$ is defined by:
$$a \circ a'=\frac{aa'+a'a}{2}.$$
It is a well-known result of R.~Kadison \cite{K} that a unital surjective isometry between $C^*$-algebras preserves the Jordan product, the next lemma is an ``approximative'' version of this result.\\
For $C^*$-algebras $A, B$ and a linear map $T:A\to B$,  we define the bilinear map $T^{J}:A^2\to B$ by $$T^{J}(a,a')=T(aa')+T(a'a)-T(a)T(a')-T(a')T(a),$$ for $a,a' \in A$. The next lemma says that a unital nearly isometric bijection between $C^*$-algebras almost preserves the Jordan product.\\
In the next proof, we will need ultraproducts in the category of $C^*$-algebras, for details see for instance \cite{BO},
\cite{BLM}, \cite{P} or \cite{ge-hadwin}.

\begin{lem}\label{jc} For any $\eta>0$, there exists $\rho >0$ such that for any unital  $C^*$-algebras $A, B$, for any unital isomorphism $T:A \to B$,
$\| T \| \leq 1+\rho$ and $\| T^{-1} \| \leq 1+\rho$ imply $\| T^{J} \| < \eta$.
\end{lem}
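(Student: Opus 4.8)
The plan is to argue by contradiction and transfer the problem to Kadison's exact theorem via an ultraproduct, so that the quantitative (approximate) statement becomes a consequence of the exact one. First I would negate the conclusion: if the lemma fails, there is some $\eta>0$ and, for every $n\in\mathbb{N}$, unital $C^*$-algebras $A_n,B_n$ and a unital isomorphism $T_n:A_n\to B_n$ with $\|T_n\|\le 1+\tfrac1n$ and $\|T_n^{-1}\|\le 1+\tfrac1n$, yet $\|T_n^{J}\|\ge\eta$. By definition of the norm of the bilinear map $T_n^{J}$, I can then pick contractions $a_n,a_n'\in A_n$ (that is, $\|a_n\|,\|a_n'\|\le 1$) with $\|T_n^{J}(a_n,a_n')\|\ge \eta/2$.

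Next I would fix a free ultrafilter $\U$ on $\mathbb{N}$ and form the $C^*$-algebra ultraproducts $A=\prod_{\U}A_n$ and $B=\prod_{\U}B_n$. Because $\sup_n\|T_n\|<\infty$, the family $(T_n)$ sends null sequences to null sequences and hence induces a well-defined linear map $T:A\to B$ acting coordinatewise; likewise $(T_n^{-1})$ induces $S:B\to A$, and the relations $T_n^{-1}T_n=T_nT_n^{-1}=\id$ at each level pass to $ST=\id_A$, $TS=\id_B$. Since $\U$ is free, $\lim_{\U}(1+\tfrac1n)=1$, so $\|T\|\le 1$ and $\|S\|=\|T^{-1}\|\le 1$; thus $\|a\|=\|STa\|\le\|Ta\|\le\|a\|$ for every $a$, forcing $T$ to be a surjective \emph{isometry}. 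It is moreover unital, the unit of $A$ being the class of $(1_{A_n})$.

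Then I would invoke Kadison's theorem: a unital surjective isometry between $C^*$-algebras is a Jordan isomorphism, i.e.\ it preserves the Jordan product, which is exactly the assertion $T^{J}\equiv 0$ on $A\times A$. On the other hand, all the algebraic operations and $T$ itself are computed coordinatewise in the ultraproduct, so for the classes $a=(a_n)_{\U}$ and $a'=(a_n')_{\U}$ one has $T^{J}(a,a')=(T_n^{J}(a_n,a_n'))_{\U}$, whose norm equals $\lim_{\U}\|T_n^{J}(a_n,a_n')\|\ge \eta/2>0$. This contradicts $T^{J}\equiv 0$ and proves the lemma.

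The main obstacle --- really the only point that needs care --- is the bookkeeping that the ultraproduct faithfully transports both the hypotheses and the conclusion: that $T$ is well defined (the uniform bound $\sup_n\|T_n\|<\infty$ is what guarantees the ideal of null sequences is preserved), that it comes out unital and a surjective isometry, and, crucially, that $T^{J}$ evaluated on the limiting classes equals the class of the $T_n^{J}(a_n,a_n')$. This last identity, which rests on the coordinatewise nature of multiplication and of $T$ in the ultraproduct, is precisely what lets Kadison's non-quantitative rigidity theorem produce the desired uniform $\rho$. Note that multiplicativity of the $T_n$ is never used; the argument only needs that $T$ is a unital surjective isometry.
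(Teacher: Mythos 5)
Your proof is correct and follows essentially the same route as the paper: negate the statement, pass to $C^*$-algebraic ultraproducts over a free ultrafilter, observe that the induced unital surjective isometry must preserve the Jordan product by Kadison's theorem, and derive a contradiction from the coordinatewise evaluation of $T^J$ on witnessing elements. Your additional bookkeeping (well-definedness of the induced map, the inverse map $S$, and the $\eta/2$ device to select witnesses for the bilinear norm) only makes explicit what the paper leaves implicit.
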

\begin{proof} Suppose the assertion is false. Then there exists $\eta_0>0$ such that
for every positive integer $n \in \mathbb{N}\backslash \{0\}$, there
is a unital linear map $T_n:A_n \to B_n$ between some unital
$C^*$-algebras satisfying $$\| T_n \| \leq 1+\frac{1}{n},~\| T^{-1}_n \| \leq 1+\frac{1}{n} ~~\hbox{and}~~ \| T_n ^J \| \geq \eta_0.$$ Let $\U$ be a nontrivial ultrafilter on
$\mathbb{N}$, let us denote $A_\U$ (resp. $B_\U$) the $C^*$-algebraic ultraproduct
$\Pi_n  A_n /\U$ (resp. $\Pi_n
 B_n /\U$). Now consider $T_\U: A_\U \to B_\U$ the
ultraproduct map obtained from the $T_n$'s. Hence $T_\U$ is a unital linear surjective isometry between
$C^*$-algebras, so $(T_\U)^J=0$ by the result of Kadison \cite{K} (recalled above), any unital surjective isometry between $C^*$-algebras preserves the Jordan product. This contradicts the hypothesis: $ \| T_n ^J \| \geq \eta_0,$ for every $n$. More precisely, this hypothesis means that there are $u_n, v_n$
in the closed unit ball of $A_n$ such
that
$ \| T_n ^J (u_n, v_n) \| \geq \eta_0$
which implies that $$\big\| (T_\U)^J ([u], [v]) \big\| \geq \eta_0$$
(where $[x]$ denotes the equivalence class of $(x_{n})_n$ in $A_\U$).
\end{proof}

We need to record two ``gap estimates".

\begin{lem}\label{norm-in-vng}
Let $G$ be a locally compact group, denote $\lambda$ its left regular representation and $\omega$ its universal representation. Let
$g_j\in G$ be distinct and $c_j\in \mathbb C$ for $1\le j\le n$.
Then
$$
\| \sum c_j \omega_{g_j} \|_{W^*(G)} \ge \| \sum c_j \lambda_{g_j} \|_{VN(G)} \ge \big(\sum |c_j|^2 \big)^{1/2}.
$$
In particular, for any $g_1,g_2,g_3,g_4 \in G$, if ($g_1=g_3$ and $g_2=g_4$) or ($g_1=g_4$ and $g_2=g_3$), then  $\| \lambda_{g_1}+\lambda_{g_2}- \lambda_{g_3}-\lambda_{g_4}\|_{VN(G)}=0$, otherwise $ \| \lambda_{g_1}+\lambda_{g_2}- \lambda_{g_3}-\lambda_{g_4}\|_{VN(G)} \geq \sqrt 2 $.
\end{lem}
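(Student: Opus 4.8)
The plan is to establish the two displayed inequalities separately, and then read off the four-term consequence by a short counting argument.

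For the first inequality I would invoke the universal property of $\omega$. Since $\omega$ is the universal representation of $G$, the regular representation factors through it: there is a (normal, surjective) $*$-homomorphism $\pi:W^*(G)\to VN(G)$ determined by $\pi(\omega_g)=\lambda_g$ for all $g\in G$. As every $*$-homomorphism between $C^*$-algebras is contractive, applying $\pi$ to $y=\sum_j c_j\omega_{g_j}$ gives $\pi(y)=\sum_j c_j\lambda_{g_j}$ with $\|\pi(y)\|\le\|y\|$, which is exactly the asserted inequality $\|\sum c_j\lambda_{g_j}\|_{VN(G)}\le\|\sum c_j\omega_{g_j}\|_{W^*(G)}$.

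For the second (and principal) inequality I would test the operator $x=\sum_j c_j\lambda_{g_j}$ against a suitably localized unit vector of $L_2(G)$. Since the $g_j$ are distinct, the finitely many elements $g_i^{-1}g_j$ with $i\neq j$ are all different from $e$, so by Hausdorffness I can choose a neighborhood $V$ of $e$ so small that $VV^{-1}$ avoids every $g_i^{-1}g_j$; this forces the translates $g_1V,\dots,g_nV$ to be pairwise disjoint. Picking any norm-one $\xi\in L_2(G)$ supported in $V$ (such $\xi$ exists because $V$ has positive Haar measure), the functions $\lambda_{g_j}\xi$ have pairwise disjoint supports, so by orthogonality together with the unitarity of each $\lambda_{g_j}$ one gets $\|x\xi\|_2^2=\sum_j|c_j|^2\|\xi\|_2^2=\sum_j|c_j|^2$, whence $\|x\|\ge(\sum_j|c_j|^2)^{1/2}$.

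Finally, for the four-term statement I would rewrite $\lambda_{g_1}+\lambda_{g_2}-\lambda_{g_3}-\lambda_{g_4}$ as $\sum_g c_g\lambda_g$, the sum running over the distinct elements among $g_1,g_2,g_3,g_4$, where $c_g\in\mathbb{Z}$ records the net coefficient. This expression vanishes precisely when the multiset $\{g_1,g_2\}$ equals $\{g_3,g_4\}$, i.e.\ in the two cases listed; otherwise some $c_g\neq 0$. Because the coefficients satisfy $\sum_g c_g=1+1-1-1=0$, a single nonzero $c_g$ is impossible, so at least two of the integers $c_g$ are nonzero and hence $\sum_g c_g^2\ge 2$. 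Combining with the second inequality yields $\|\lambda_{g_1}+\lambda_{g_2}-\lambda_{g_3}-\lambda_{g_4}\|_{VN(G)}\ge\sqrt 2$. I expect the only genuinely delicate point to be the localization step for non-discrete groups, namely the choice of $V$ making the translates disjoint together with the existence of a nonzero $L_2$ vector supported in $V$; the remaining steps are either formal (universality) or elementary counting.
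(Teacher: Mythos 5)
Your proposal is correct and follows essentially the same route as the paper: the first inequality comes from the fact that $\lambda$ factors through the universal representation (the paper phrases this as the universal representation being the direct sum of all unitary representations, you via the canonical epimorphism $\pi:W^*(G)\to VN(G)$ --- the same fact), and the main inequality is obtained exactly as in the paper by testing against a unit vector localized in a small neighbourhood whose translates $g_jV$ are pairwise disjoint, with your treatment of the disjointification being if anything slightly more careful. The only cosmetic difference is in the four-term consequence, where the paper enumerates cases by the number of coincidences among $g_1,\dots,g_4$ (getting lower bounds $\sqrt2$, $2$, $2\sqrt2$, $\sqrt6$), while you use the cleaner observation that the net integer coefficients sum to zero, so at least two are nonzero and $\sum_g c_g^2\ge 2$; both reduce to the same application of the displayed inequality.
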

\begin{proof}
Since the universal representation is the direct sum of all unitary representations, the first inequality is obvious. Now let $U\subset G$ be an open neighbourhood of identity such that $g_jU$, $j=1,\dots,n$, are pairwise disjoint.
Set $f = 1_U$ the characteristic function of $U$. Then $\lambda_{g_j}(f) = 1_{g_jU}$, and
\begin{align*}
\| \sum c_j \lambda_{g_j} \|_{VN(G)} &\ge \| \sum c_j \lambda_{g_j} (f)\|_2/\|f\|_2 \\
 &= \Big(\sum \int_{g_jU} |c_j|^2\Big)^{1/2}\Big(\int_U 1\Big)^{-1/2} = \big(\sum |c_j|^2\big)^{1/2}.
\end{align*}
It follows that if $g_1,g_2,g_3,g_4$ do not satisfy the equalities in the statement, then $\| \lambda_{g_1}+\lambda_{g_2}- \lambda_{g_3}-\lambda_{g_4}\|_{VN(G)}$ is estimated from below, depending on the number of equalities, by $\sqrt2$, $2$, $2\sqrt2$ or $\sqrt6$, being always not less than $\sqrt2$.
\end{proof}

\begin{lem}\label{G-separated-in-spectre-of-BG}
Let $g_1,\dots,g_n\in G$ be distinct, and $c_1,\dots,c_n \in \mathbb{Z} \backslash \{0\}$. Let $s_1,\dots,s_m\in \sigma (B(G))$.
If for all $i=1,\dots,m$, $s_i \notin G$, then
$$
1 \leq \| \sum_{k=1}^n c_k \omega_{g_k} - \sum_{i=1}^m s_i \|_{W^*(G)}.
$$
\end{lem}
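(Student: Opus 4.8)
The plan is to argue entirely by duality, using that $W^*(G)=B(G)^*$ together with the fact that $A(G)$ embeds contractively (indeed isometrically) into $B(G)$ as a closed ideal. Writing $X=\sum_{k} c_k \omega_{g_k}-\sum_i s_i\in W^*(G)$, the norm $\|X\|_{W^*(G)}$ is the supremum of $|\langle f,X\rangle|$ over the unit ball of $B(G)$; the strategy is to restrict this supremum to the unit ball of $A(G)$, on which the $s_i$-terms will vanish, reducing everything to the already-proven gap estimate \lemref{norm-in-vng}.

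First I would locate the $s_i$ precisely. By Walter's structure theorem (\cite{Wa}, Theorem~1)---the same dichotomy already invoked in the proof of case~3 of \thmref{main1}---every element of $\sigma(B(G))$ lies either in $G$ or in the annihilator $A(G)^0$ of $A(G)$ inside $B(G)^*=W^*(G)$. Since by hypothesis no $s_i$ belongs to $G$, each $s_i$ annihilates $A(G)$, that is $\langle f,s_i\rangle=0$ for every $f\in A(G)$.

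Next I would test $X$ against $A(G)$. Recalling that $\omega_g$ acts on $B(G)$ as evaluation, $\langle f,\omega_g\rangle=f(g)$, the previous step gives for every $f\in A(G)$ the identity $\langle f,X\rangle=\sum_k c_k f(g_k)$, the $s_i$-contributions having disappeared. Because the inclusion $A(G)\subseteq B(G)$ is contractive, the unit ball of $A(G)$ sits inside that of $B(G)$, and under the duality $A(G)^*=VN(G)$ one has $\langle f,\lambda_g\rangle=f(g)$, so the functional $f\mapsto\sum_k c_k f(g_k)$ is exactly the element $\sum_k c_k\lambda_{g_k}\in VN(G)$. Hence
$$\|X\|_{W^*(G)} \ge \sup_{f\in A(G),\, \|f\|_{A(G)}\le 1} \Big|\sum_k c_k f(g_k)\Big| = \Big\|\sum_k c_k \lambda_{g_k}\Big\|_{VN(G)}.$$
Since the $g_k$ are distinct, \lemref{norm-in-vng} bounds the right-hand side below by $(\sum_k|c_k|^2)^{1/2}$, and as the $c_k$ are nonzero integers each $|c_k|\ge 1$, so this is at least $1$, which finishes the proof.

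I expect the only genuine subtlety to be the first step: one must be certain that the dichotomy ``$s\in G$ or $s\in A(G)^0$'' for $s\in\sigma(B(G))$ is precisely what Walter's theorem supplies, and that $A(G)^0$ is the correct annihilator for the duality $W^*(G)=B(G)^*$. Everything afterwards is a routine duality computation resting on the contractivity of $A(G)\hookrightarrow B(G)$ and the gap estimate \lemref{norm-in-vng}; the integrality of the $c_k$ enters only at the very end, to turn $(\sum_k|c_k|^2)^{1/2}$ into the lower bound $\ge 1$.
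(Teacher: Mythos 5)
Your proof is correct and is essentially the paper's own argument in dual form: restricting the dual pairing to the unit ball of $A(G)$ is exactly applying the canonical epimorphism $\pi\colon W^*(G)\to VN(G)$ (the adjoint of the inclusion $A(G)\hookrightarrow B(G)$, satisfying $\pi\circ\omega=\lambda$) that the paper uses, with the $s_i$ killed because they lie in $A(G)^0$, followed by the same appeal to \lemref{norm-in-vng}. The only cosmetic difference is that the paper leaves the dichotomy $\sigma(B(G))\subset G\cup A(G)^0$ implicit (it is Walter's Theorem 1, cited earlier in the proof of case 3 of \thmref{main1}), whereas you spell it out.
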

\begin{proof} As the $\lambda_g$'s are linearly independent, $\sum_{k=1}^n c_k \lambda_{g_k} \neq 0$.
Consider the canonical epimorphism $\pi:W^*(G)\to VN(G)$ satisfying $\pi \circ \omega=\lambda$, then
\begin{align*}
\| \sum_{k=1}^n c_k \lambda_{g_k} \|_{VN(G)} =& \Big\| \,\pi\Big( \sum_{k=1}^n c_k \omega_{g_k} - \sum_{i=1}^m  s_i \Big) \Big\|_{VN(G)} \\
\le & \| \sum_{k=1}^n c_k \omega_{g_k} - \sum_{i=1}^m  s_i \|_{W^*(G)}.
\end{align*}
By the previous lemma, the left-hand side is greater or equal to $1$.
\end{proof}

We are in position to prove Theorem \ref{main2}.

\begin{proof}[Proof of Theorem \ref{main2}]
Let $\varepsilon_0 >0$ be the constant $\rho>0$ corresponding to $\eta=1$ in Lemma \ref{jc}.\\
In case (1), we identify an element of a group with its image under the left regular representation and we can replace $T$ by a unital algebra isomorphism $S$ as in the proof 2.\ of Theorem \ref{main1} with $\|  S \| \|  S^{-1} \| \leq \|  T \| \|  T^{-1} \| <1+\varepsilon_0$.
But since $\|S\|=\|S^*\|\geq\|S^*(e_H)\|=1$, we have in fact that $\|S\|<1+\varepsilon_0$ and $\|S^{-1}\|<1+\varepsilon_0$. Moreover the restriction of $S^*$ maps homeomorphically $H$ onto $G$. We are going to prove that $S^*: VN(H) \to VN(G)$ is actually a $*$-preserving Jordan isomorphism.
For every $h\in H$, by Lemma \ref{jc}, we have:
\begin{align*}
\| S^{*J}(h,h^{-1}) \|=&\| S^*(h h^{-1}) + S^*(h^{-1} h)-S^*(h)S^*(h^{-1})-S^*(h^{-1})S^*(h) \|
\\ =\,& \| 2\, \lambda_{\,1_G}-S^*(h)S^*(h^{-1})-S^*(h^{-1})S^*(h) \| < \eta.
\end{align*}
Then, by Lemma \ref{norm-in-vng}, we get that
$S^*(h)S^*(h^{-1}) = S^*(h^{-1})S^*(h) = \lambda_{\,1_G}$, thus $S^*(h)^{*}=S^*(h^{*})$. As $VN(H)$ is the $w^*$-closure of the linear span of $H$ (and the map $x \in M \mapsto x^* \in M$ is $w^*$-continuous on a general von Neumann algebra $M$) and $S^*$ is obviously $w^*$-continuous, we obtain that $S^*: VN(H) \to VN(G)$ is $*$-preserving, i.e. for any $x \in VN(H)$,  $S^*(x^*)=S^*(x)^*$.
Now let us prove that $S^*$ preserves the Jordan product. By Lemma \ref{jc} again, for all $h_1, h_2\in H$
$$
\| S^{*J}(h_1,h_2) \|=\| S^*(h_1h_2)+ S^*(h_2h_1)-S^*(h_1)S^*(h_2)-S^*(h_2)S^*(h_1) \|< \eta.
$$
By Lemma \ref{norm-in-vng}, it follows that $$
S^*(h_1h_2)=S^*(h_1)S^*(h_2) ~~\hbox{and}~~ S^*(h_2h_1)=S^*(h_2)S^*(h_1)$$ or $$S^*(h_1h_2)=S^*(h_2)S^*(h_1) ~~\hbox{and}~~ S^*(h_2h_1)=S^*(h_1)S^*(h_2).
$$
Consequently, in both cases, we have (with notation of the Jordan product before Lemma \ref{jc}), $$S^*(h_1 \circ h_2)=S^*(h_2) \circ S^*(h_1).$$
By $w^*$-density (since the multiplication is separately $w^*$-continuous on a von Neumann algebra) as above, we obtain that $S^*: VN(H) \to VN(G)$ preserves the Jordan product. We conclude that $S^*: VN(H) \to VN(G)$ is actually a $*$-Jordan isomorphism. From here, we can follow the proof of \cite{Wa} Theorem 2 to obtain that the restriction of $S^*$ to $H$ is topological group isomorphism or anti-isomorphism from $H$ onto $G$.\\
In case (2), we identify an element of a group with its image under the universal representation and we can still replace $T$ by a unital algebra isomorphism $S$ as 3.\ of Theorem \ref{main1} with $\|S\|<1+\varepsilon_0$ and $\|S^{-1}\|<1+\varepsilon_0$. Clearly $S^*: W^*(H) \to W^*(G)$ maps homeomorphically $\sigma(B(H))$ onto $\sigma(B(G))$, but we have to prove that $S^*$ maps homeomorphically $H$ onto $G$ (here we can not follow 3.\ of Theorem \ref{main1} here). For every $h\in H$, by Lemma \ref{jc}:
\begin{align*}
\| S^{*J}(h,h^{-1}) \|=&\| S^*(h h^{-1}) + S^*(h^{-1} h)-S^*(h)S^*(h^{-1})-S^*(h^{-1})S^*(h) \|
\\ =\,& \| 2\, \omega_{\,1_G}-S^*(h)S^*(h^{-1})-S^*(h^{-1})S^*(h) \| < \eta.
\end{align*}
Applying Lemma \ref{G-separated-in-spectre-of-BG}, we get that $S^*(h)S^*(h^{-1}) \in G$ and $S^*(h^{-1})S^*(h) \in G$. Then, by Lemma \ref{norm-in-vng}, we conclude that
$S^*(h)S^*(h^{-1}) = S^*(h^{-1})S^*(h) = \omega_{\,1_G}$. Thus, $S^*(h)$ is invertible and $S^*(h)^{-1}=S^*(h^{-1})$.
By \cite{Wa} Theorem 1, we have the identifications $$H=\sigma(B(H)) \cap W^*(H)_u=\sigma(B(H)) \cap W^*(H)_r,$$
where $W^*(H)_u$ (resp. $W^*(H)_r$) denotes the unitary group (resp. the invertible group) of the von Neumann algebra $W^*(H)$, hence $S^*(h)$ is actually a unitary, $S^*(h)^{*}=S^*(h^{*})$ and $S^*(H)=G$.
The rest of the proof follows case $(1)$ and finish with the proof of \cite{Wa} Theorem 3.
\end{proof}

In \cite{L}, A.T.-M. Lau considers the second conjugate $A(G)^{**}=VN(G)^*$ equipped with the Arens product. He proved (see Theorem 5.3  \cite{L}) that $G$ and $H$ are isomorphic if there is an order preserving isometric algebra isomorphism $T:A(G)^{**} \to A(H)^{**}$. The next corollary improves his theorem, even in the isometric case. For the proof, one just needs to combine the beginning of the proof of Corollary \ref{l} and the proof of Theorem \ref{main2} (with computation now inside of the von Neumann algebra $VN(G)^{**}$), we leave it to the reader.

\begin{cor}\label{lau} Let $G,H$ be two discrete groups. There exists a universal constant $\varepsilon_0>0$ such that: if there is an algebra isomorphism $T:A(G)^{**} \to A(H)^{**}$ with  $\|  T \| \,\|T^{-1}\|  <1+\varepsilon_0$, then $G$ and $H$ are isomorphic.
\end{cor}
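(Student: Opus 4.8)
The plan is to transport the argument of \thmref{main2} into the biduals $VN(G)^{**}$ and $VN(H)^{**}$, using Lemma 5.1 of \cite{L} to describe $\sigma(A(G)^{**})$ in the role played by Walter's identification of $\sigma(B(G))$ in \thmref{main2}. I would take $\varepsilon_0$ to be exactly the constant $\rho$ associated with $\eta=1$ in \lemref{jc}, the same choice as in \thmref{main2}. The point that makes the transfer legitimate is that the canonical inclusion $VN(G)\hookrightarrow VN(G)^{**}$ is an isometric unital embedding of von Neumann algebras, so every norm computed among group elements and their products inside $VN(G)^{**}$ agrees with the corresponding norm in $VN(G)$; consequently the $\sqrt2$ gap estimate of \lemref{norm-in-vng} applies verbatim to such elements regarded in $VN(G)^{**}$.

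First I would pin the groups down inside the spectra and reduce to a unital map, exactly as in the opening of the proof of Corollary \ref{l}. By Lemma 5.1 of \cite{L} one has $H=\sigma(A(H)^{**})\cap VN(H)^{**}_u=\sigma(A(H)^{**})\cap VN(H)^{**}_r$ and $\sigma(A(G)^{**})\setminus G\subset A(G)^0$, so for the linear isomorphism $T^*:VN(H)^{**}\to VN(G)^{**}$ each $h\in H$ satisfies either $T^*(h)\in G$ or $T^*(h)\in A(G)^0$. Setting $\Omega_T=\{h\in H:T^*(h)\in G\}$, the injectivity of $T$ together with the discreteness of $G$ forces $T^*(\Omega_T)=G$, and the same reasoning applied to $T^{-1}$ yields $\Omega_T=H$ and $T^*(H)=G$. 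Composing $T$ with suitable translations as in case 2 of \thmref{main1}, I may then replace $T$ by a unital algebra isomorphism $S$ with $\|S\|\,\|S^{-1}\|\le\|T\|\,\|T^{-1}\|<1+\varepsilon_0$; since $S$ is unital, $\|S\|,\|S^{-1}\|\ge 1$, whence both are $<1+\varepsilon_0$, and $S^*$ still maps $H$ bijectively onto $G$.

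Next I would run the Jordan computation of \thmref{main2} inside $VN(G)^{**}$. For $h_1,h_2\in H$ we have $S^*(h_1),S^*(h_2)\in G$, so the products $S^*(h_1)S^*(h_2)$ and $S^*(h_2)S^*(h_1)$ are again group elements of $G$ (computed in the isometric copy of $VN(G)$). Applying \lemref{jc} to $S$ gives, for all $h_1,h_2\in H$,
$$\| S^*(h_1 h_2) + S^*(h_2 h_1) - S^*(h_1)S^*(h_2) - S^*(h_2)S^*(h_1) \| < \eta = 1 < \sqrt 2,$$
and, specializing to $h_2=h_1^{-1}$, the analogous bound on $\|2\,\lambda_{1_G}-S^*(h_1)S^*(h_1^{-1})-S^*(h_1^{-1})S^*(h_1)\|$. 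Since each term is a group element, \lemref{norm-in-vng} upgrades these strict inequalities to exact identities: $S^*(h)S^*(h^{-1})=S^*(h^{-1})S^*(h)=\lambda_{1_G}$, and for each pair either $S^*(h_1h_2)=S^*(h_1)S^*(h_2)$ together with $S^*(h_2h_1)=S^*(h_2)S^*(h_1)$, or the transposed pair of identities. Thus the bijection $S^*|_H:H\to G$ carries the identity to the identity, preserves inverses, and is Jordan multiplicative on group elements.

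Finally I would conclude at the group level, invoking the dichotomy at the heart of \cite{Wa} Theorem 2: a bijection between groups that fixes the identity, preserves inverses, and for every pair realizes consistently either $S^*(h_1h_2)=S^*(h_1)S^*(h_2)$ or $S^*(h_1h_2)=S^*(h_2)S^*(h_1)$ is globally either an isomorphism or an anti-isomorphism, and in either case $H\cong G$ (compose with inversion in the anti-isomorphism case). As $G,H$ are discrete, this is precisely the desired conclusion. I expect the main obstacle to be exactly the temptation to mimic \thmref{main2} literally by extending $S^*$ to a $*$-Jordan isomorphism of the \emph{whole} bidual via $w^*$-density: unlike $VN(G)$ and $W^*(G)$, the linear span of the group need not be $w^*$-dense in $VN(G)^{**}$, so that step is not available here. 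Working directly at the level of group elements, where \lemref{norm-in-vng} and \lemref{jc} apply unchanged and where the discreteness of the groups makes the group-theoretic conclusion immediate, is what sidesteps this difficulty; once the reduction $S^*(H)=G$ and the group-level Jordan identities are in hand, the remaining computation is identical to the one already carried out in \thmref{main2}.
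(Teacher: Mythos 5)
Your proposal is correct, and up to its final step it is exactly what the paper intends: the paper's own ``proof'' of Corollary~\ref{lau} is a two-line instruction to splice the beginning of the proof of Corollary~\ref{l} (Lau's Lemma 5.1, the dichotomy $T^*(h)\in G$ or $T^*(h)\in A(G)^0$, injectivity plus discreteness forcing $\Omega_T=H$ and $T^*(H)=G$, then the unital reduction $S$) onto the proof of Theorem~\ref{main2} (Lemma~\ref{jc} with $\eta=1$, the gap estimate of Lemma~\ref{norm-in-vng}), ``with computation now inside of the von Neumann algebra $VN(G)^{**}$''. You follow this plan with the same choice of $\varepsilon_0$. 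Where you deviate is at the end, and the deviation is a genuine repair rather than a detour: a literal transfer of Theorem~\ref{main2} would, after the group-level identities, extend $S^*$ to a $*$-Jordan isomorphism of the \emph{whole} algebra by $w^*$-density of the span of the group elements, and then ``follow the proof of [Wa] Theorem 2''. As you correctly observe, that density fails in the bidual: for infinite discrete $G$ the $w^*$-closure of $\mathrm{span}\,\lambda(G)$ in $VN(G)^{**}$ is a copy of $C^*_r(G)^{**}$, which is proper, since any nonzero functional on $VN(G)$ annihilating $C^*_r(G)$ is an element of the predual $VN(G)^*$ of $VN(G)^{**}$ vanishing on $\mathrm{span}\,\lambda(G)$. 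Staying at the level of group elements, where Lemmas~\ref{jc} and \ref{norm-in-vng} apply verbatim through the unital isometric inclusion $VN(G)\subset VN(G)^{**}$, is the right way to make the paper's sketch rigorous.

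The one point you still owe is the final gluing step. You assert that a surjection $s:H\to G$ with $s(h_1h_2)\in\{s(h_1)s(h_2),\,s(h_2)s(h_1)\}$ for all pairs must be globally a homomorphism or an anti-homomorphism, attributing this to ``the dichotomy at the heart of [Wa] Theorem 2''. That attribution is off: Walter (and the paper, in Theorem~\ref{main2}) glues the two behaviours using the central-projection decomposition of a Jordan isomorphism defined on the whole von Neumann algebra --- precisely the global object you have just argued cannot be built here. Nor can one simply rerun Walter's argument inside the $w^*$-closure $C^*_r(H)^{**}$: in $C^*_r(\mathbb{Z})^{**}=C(\mathbb{T})^{**}$ the nontrivial group element $\lambda_1$ fixes the nonzero (automatically central) projection given by the atom at $1\in\mathbb{T}$, so the step ``distinct group elements cannot agree against a nonzero central projection'', which is what makes the central-projection decomposition bite in $VN(G)$, breaks in biduals. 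The purely group-theoretic statement you need is true, but it is a theorem with its own (elementary yet nontrivial) proof: it is W.~R.~Scott's half-homomorphism theorem (\emph{Half-homomorphisms of groups}, Proc.\ Amer.\ Math.\ Soc.\ 8 (1957), 1141--1144), the group analogue of the Hua and Jacobson--Rickart results on Jordan homomorphisms; note that only surjectivity of $s$ is needed there, not your identity- and inverse-preservation. With that citation (or a proof) inserted, your argument is complete, and in fact it is cleaner and more self-contained than the route the paper gestures at.
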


\begin{rem}  Our proof of Theorem \ref{main2} does not yield an explicit value of $\varepsilon_0$, but we can estimate it from above. By Example 1 of \cite{KW}, there is an isomorphism $T:A(\mathbb{Z}_4)\to A(\mathbb{Z}_2\times \mathbb{Z}_2)$ of norm $\sqrt2$ and one can calculate that the distortion $\|  T \| \ \|T^{-1}\| =2$, hence $\varepsilon_0\le1$.
\end{rem}

\begin{rem} To our knowledge, Kalton and Wood's algebra isomorphism mentioned in the previous remark is the only known computation of the distortion of an algebra isomorphism between Fourier algebras. As it involves only abelian groups, we find it interesting to give another example involving a non-abelian group. We claim that
there exists an algebra isomorphism between $A(\mathbb{Z}_6)$ and $A(S_3)$ of distortion $2$ (where $S_3$ denotes the symmetric group on three elements).\\
A function $f\in A(\mathbb{Z}_6)$ is represented by its values: $(f_0,\dots,f_5)$. The group $\mathbb{Z}_6$ has 6 characters: $\chi_j(k)=e^{i\pi jk/3}$, $j=0,\dots,5$.
Hence for $j=0,1,\dots,5$, its Fourier coefficients are $\hat f_j = \sum_{k=0}^5 f_k\, e^{i\pi k j/3}$ and
$$\|f\|_{A(\mathbb{Z}_6)} = \sum_{j=0}^5 |\hat f_j|.$$
The group $S_3$ is generated by the transposition $s=(12)$ and the cycle $r=(123)$. We have thus $S_3=\{\id,s,r,sr,r^2,sr^2\}$.
Define $\phi:S_3 \to \mathbb{Z}_6$ by $\phi(\id)=0$, $\phi(s)=1$, $\phi(r)=2$, $\phi(sr)=3$, $\phi(r^2)=4$, $\phi(sr^2)=5$. Now define a surjective algebra isomorphism $\Phi: A(\mathbb{Z}_6)\to A(S_3)$ by $\Phi(f)(h)=f(\phi(h))$, $f \in A(\mathbb{Z}_6)$, $h \in S_3$. But from now on, for simplicity, we identify $k \in \mathbb{Z}_6$ with $\phi^{-1}(k) \in S_3$.
The group $S_3$ has three irreducible representations: $\chi_0$, $\chi_3$ and $\pi$ (of multiplicity $2$) defined by
$$
\pi(s)= \left[ \begin{array}{cc} 0 & 1 \\ 1 &  0 \end{array}\right]
\quad \text{and} \quad \pi(r)= \left[ \begin{array}{cc} e^{i \frac{2\pi}{3}} & 0 \\  0 &
    e^{-i \frac{2\pi}{3}} \end{array}\right].
$$
The coefficients of $\pi$, as functions on $S_3=\{\id,s,r,sr,r^2,sr^2\}$, are:
$$
\begin{array}{cccccccc}
\pi_{11}& = & (\,1 & 0 & e^{2\pi i/3}& 0 & e^{-2\pi i/3}& 0\,)\\
\pi_{21}& = & (\,0 & 1 & 0 & e^{2\pi i/3}& 0 &e^{-2\pi i/3}\,)\\
\pi_{12}& = & (\,0 & 1 & 0 & e^{-2\pi i/3}& 0 &e^{2\pi i/3}\,)\\
\pi_{11}& = & (\,1 & 0 & e^{-2\pi i/3}& 0 & e^{2\pi i/3}& 0\,)\\
\end{array}
$$
One verifies that
\begin{align*}
\pi_{11} &= (\chi_1+\chi_4)/2;\\
\pi_{21} &= (\chi_1-\chi_4)e^{-\pi i/3}/2;\\
\pi_{12} &= (\chi_2-\chi_5)e^{-2\pi i/3}/2;\\
\pi_{22} &= (\chi_2+\chi_5)/2.
\end{align*}
It follows that for any $f \in A(S_3)$,
$$
\pi(f) = \frac12 \left[ \begin{array}{cc} \hat f_1+\hat f_4 & e^{-2\pi i/3} (\hat f_2 - \hat f_5) \\
                                          e^{-\pi i/3} (\hat f_1-\hat f_4) & \hat f_2+\hat f_5 \end{array} \right].
$$
Since the left regular representation $\lambda_{S_3}$  is unitarily equivalent to $\chi_0\oplus\chi_3\oplus \pi \oplus \pi$,
$$
\|f\|_{A(S_3)} = |\chi_0(f)| + |\chi_3(f)| + 2\, \|\pi(f)\|_1 = |\hat f_0|+|\hat f_3|+2\,\|\pi(f)\|_1,
$$
where for $1 \le p < \infty$, $\|x\|_p = (\Tr((x^*x)^{p/2}))^{1/p}$ is the $p$-Schatten norm of $x \in \mathbb{M}_2$. Clearly $\|x\|_2=(|x_{11}|^2+|x_{12}|^2+|x_{21}|^2+|x_{22}|^2)^{1/2}$.
Also let us recall the \footnote{Denoting $\lambda_{1}$,$\lambda_{2}$ the eigenvalues of $x^*x$, then
$\lambda_1+\lambda_2=\|x\|_2^2$
and
 $\lambda_1\lambda_2 = \det (x^*x)=\vert \det (x) \vert^2$. Moreover $\|x\|_1 =\Tr((x^*x)^{1/2})= \sqrt{\lambda_1}+\sqrt{\lambda_2}
 = \big( \lambda_1 + \lambda_2 + 2\sqrt{\lambda_1\lambda_2} \big)^{1/2}$. } identity: for $x  \in \mathbb{M}_2$, $$\|x\|_1=\big( \|x\|_2^2 + 2 \vert \det (x) \vert \big)^{1/2}.$$
 In particular, $\|x\|_2 \le \|x\|_1$, hence, we get \begin{align*}
\|\pi(f)\|_1 &\ge \big( |\hat f_1+\hat f_4|^2 + |\hat f_1-\hat f_4|^2 + |\hat f_2+\hat f_5|^2 + |\hat f_2-\hat f_5|^2 \big) ^{1/2}/2\\
&= \big( |\hat f_1|^2+|\hat f_4|^2 + |\hat f_2|^2+|\hat f_5|^2 \big) ^{1/2} / \sqrt2
 \\& \ge (|\hat f_1|+|\hat f_4| + |\hat f_2|+|\hat f_5|)/(2\sqrt2).
\end{align*}
This implies that $\|f\|_{A(\mathbb{Z}_6)}/\sqrt2 \le \|f\|_{A(S_3)}$.
 For the other estimate, we apply the aforementioned identity to $x=2\pi(f)$. Note first that
$$
\|2\pi(f)\|^2_2 = 2(|\hat f_1|^2+|\hat f_4|^2 + |\hat f_2|^2+|\hat f_5|^2)
$$ and
\begin{align*}
\vert \det (2\pi(f)) \vert
 & = |(\hat f_1+\hat f_4)(\hat f_2+\hat f_5) + ({\hat f}_2 - {\hat f}_5)({\hat f}_1-{\hat f}_4)|
 \\& = |\hat f_1\hat f_2+\hat f_1\hat f_5+\hat f_2\hat f_4+\hat f_4\hat f_5 + \hat f_1\hat f_2 - \hat f_1\hat f_5
 - \hat f_2\hat f_4 + \hat f_4\hat f_5|
 \\& = 2|\hat f_1\hat f_2+\hat f_4\hat f_5|.
\end{align*}
This gives
\begin{align*}
\|2\pi(f)\|_1 &
 = \Big( 2(|\hat f_1|^2+|\hat f_4|^2 + |\hat f_2|^2+|\hat f_5|^2)
  + 4|\hat f_1\hat f_2+\hat f_4\hat f_5| \Big)^{1/2}
\\& \le
\sqrt2 \Big( |\hat f_1|^2+|\hat f_4|^2 + |\hat f_2|^2+|\hat f_5|^2
  + 2 (|\hat f_1\hat f_2|+|\hat f_4\hat f_5|) \Big)^{1/2}
\\&= \sqrt2 \Big( (|\hat f_1|+|\hat f_2|)^2+(|\hat f_4|+|\hat f_5|)^2\Big)^{1/2}
\\& \le \sqrt2 ( |\hat f_1| +|\hat f_2| + |\hat f_4| + |\hat f_5| ),
\end{align*}
which implies $\|f\|_{A(S_3)} \le \sqrt2 \|f\|_{A(\mathbb{Z}_6)}$.\\
Finally,  we have proved
$$
\|f\|_{A(\mathbb{Z}_6)}/\sqrt2 \le \|f\|_{A(S_3)} \le \sqrt2 \|f\|_{A(\mathbb{Z}_6)}.
$$
Moreover, both equalities are attained. For the first one, set $\hat f_k=1$ for $k=1,2,4$, $\hat f_5=-1$ and $\hat f_0=\hat f_3=0$, then $\|f\|_{A(\mathbb{Z}_6)}=4$ and \footnote{$\|2\pi(f)\|_1
 = \sqrt2\Big( |\hat f_1|^2+|\hat f_4|^2 + |\hat f_2|^2+|\hat f_5|^2
  + 2|\hat f_1\hat f_2+\hat f_4\hat f_5| \Big)^{1/2}
  =2\sqrt2,$}
 $\|f\|_{A(S_3)} = \|2\pi(f)\|_1 =2\sqrt2$.
For the second equality, take $\hat f_k=0$ for $k\ne1$ and $\hat f_1=1$, then
$\|f\|_{A(\mathbb{Z}_6)} = 1$ and \footnote{$\pi(f) = \frac12\left[\begin{array}{cc}1&0\\e^{-i\pi/3}&0\end{array}\right]$, so $\sqrt{\pi(f)^*\pi(f)} = \frac12 \left[\begin{array}{cc}\sqrt2&0\\0&0\end{array}\right]$.}  $\|f\|_{A(S_3)} = \|2\pi(f)\|_1 = \sqrt2$. We conclude that $\|\Phi\|=\|\Phi^{-1}\|=\sqrt2$.


\end{rem}


\begin{thebibliography}{99}
\bibitem{A} D. Amir, \textit{On isomorphisms of continuous function
  spaces}. Israel J. Math. 3 (1965) 205--210.
 \bibitem{B} Y. Benyamini, \textit{Near isometries in the class of $L_1$-preduals}. Israel J. Math. 20 (1975), no. 3-4, 275--281.
\bibitem{BLM} D.~P. Blecher, C. Le Merdy, \textit{Operator algebras and their modules---an operator space approach}. London Mathematical Society Monographs. New Series, 30. Oxford Science Publications.
 \bibitem{BO} N.~P. Brown, N. Ozawa,
\textit{$C^*$-algebras and finite-dimensional approximations}.
Graduate Studies in Mathematics, 88. American Mathematical Society, Providence, RI, 2008.
\bibitem{Ca} M. Cambern, \textit{On isomorphisms with small bound}. Proc. Amer. Math. Soc. 18 (1967) 1062--1066.

\bibitem{ER1} E.~G. Effros, Z.-J. Ruan, \textit{Operator spaces}. London Mathematical Society Monographs. New Series, 23. The Clarendon Press, Oxford University Press, New York, 2000.
\bibitem{E} P. Eymard, \textit{L'alg\`ebre de Fourier d'un groupe localement compact}. (French) Bull. Soc. Math. France 92 (1964), 181--236.
 \bibitem{ge-hadwin} L. Ge, D. Hadwin, \textit{Ultraproducts of $C^*$-algebras}, in Operator Theory: Advances and Applications, Springer, Vol. 127 (2001), pp. 305--326.
\bibitem{J2} B.~E. Johnson, \textit{Isometric isomorphisms of measure algebras}. Proc. Amer. Math. Soc. 15 (1964) 186--188.
\bibitem{JRS} M. Junge, Z.-J. Ruan, D. Sherman, \textit{A classification for 2-isometries of noncommutative Lp-spaces}. Israel J. Math. 150 (2005), 285--314.
 \bibitem{K} R.~V. Kadison,  \textit{Isometries of operator algebras}. Ann. Of Math. (2) 54, (1951). 325--338.
\bibitem{KW} N.~J. Kalton, G.~V. Wood, \textit{Homomorphisms of group algebras with norm less than Ã2}. Pacific J. Math. 62 (1976), no. 2, 439--460.
\bibitem{L} A.T.-M. Lau, \textit{The second conjugate algebra of the Fourier algebra of a locally compact group}. Trans. Amer. Math. Soc. 267 (1981), no. 1, 53--63.
\bibitem{Li} L. Livshits, \textit{A note on 0-1 Schur multipliers}. Linear Algebra Appl. 222 (1995), 15--22.
\bibitem{IS} M. Ilie, N. Spronk, \textit{Completely bounded homomorphisms of the Fourier algebras}. J. Funct. Anal. 225 (2005), no. 2, 480--499.
\bibitem{Pa} V.~I. Paulsen, \textit{Completely bounded maps and operator algebras}. Cambridge Studies in Advanced Mathematics, 78. Cambridge University Press, Cambridge, 2002.
\bibitem{Ph} H.~L. Pham, \textit{Contractive homomorphisms of the Fourier algebras}. Bull. Lond. Math. Soc. 42 (2010), no. 5, 937--947.
\bibitem{P} G. Pisier, \textit{Introduction to operator space theory}. London Mathematical Society Lecture Note Series, 294. Cambridge University Press, Cambridge, 2003.
\bibitem{S} A.-M. Popa, \textit{On idempotents of completely bounded multipliers of the Fourier algebra A(G)}. Indiana Univ. Math. J. 58 (2009), no. 2, 523--535.
\bibitem{Rig} R.~Rigelhof, \textit{Norm decreasing homomorphisms of measure algebras}, Trans. Amer. Math. Soc. 136 (1969), 361--371.
\bibitem{Ru} Z.-J. Ruan, \textit{The operator amenability of A(G)}. Amer. J. Math., 117:1449--1474 (1995).
\bibitem{Wa} M.~E. Walter, \textit{$W^*$-algebras and nonabelian harmonic analysis}. J. Functional Analysis 11 (1972), 17--38.
\bibitem{We} J.~G. Wendel, \textit{On isometric isomorphisms of group algebras}. Pacific J. Math. 1, (1951). 305--311.
\bibitem{We2} J.~G. Wendel,  \textit{Left centralizers and isomorphisms of group algebras}. Pacific J. Math. 2, (1952). 251--261
\end{thebibliography}
\end{document}